\newcommand{\xslalph}[1]{\expandafter\@xslalph\csname c@#1\endcsname}
\newcommand{\@xslalph}[1]{
    \ifcase#1\or a\or b\or c\or \or d\or e\or f\or g\or h\or i
    \or j\or k\or l\or m\or n\or o\or p\or r\or s\or 
    \or t\or u\or v\or z\or \v{z}
    \else\@ctrerr\fi
}
\AddEnumerateCounter{\xslalph}{\@xslalph}{m}
\theoremstyle{theorem}
\newtheorem{theorem}{Theorem}[section]
\newtheorem{lemma}[theorem]{Lemma}
\newtheorem{corollary}[theorem]{Corollary}
\newtheorem{proposition}[theorem]{Proposition}
\theoremstyle{definition}
\newtheorem{definition}[theorem]{Definition}
\newtheorem{notation}[theorem]{Notation}
\newtheorem{assumption}[theorem]{Assumption}
\newtheorem{example}[theorem]{Example}
\newtheorem{remark}[theorem]{Remark}
\newtheorem{conjecture}[theorem]{Conjecture}
\numberwithin{equation}{section}
\newcommand{\Int}{\mathrm{Int}}
\newcommand{\Id}{\mathrm{Id}}
\newcommand{\grad}{\mathrm{grad}}
\newcommand{\im}{\mathrm{Im}}
\newcommand{\re}{\mathrm{Re}}
\newcommand{\hol}{\mathrm{hol}}
\newcommand{\trop}{\mathrm{trop}}
\newcommand{\Ext}{\mathrm{Ext}}
\newcommand{\tw}{\mathrm{tw}}
\newcommand{\sflat}{\mathrm{sf}}
\newcommand{\diag}{\mathrm{diag}}
\newcommand{\Hom}{\mathrm{Hom}}
\newcommand{\pd}[2]{\frac{\partial #1}{\partial #2}}
\newcommand{\secpd}[3]{\frac{{\partial}^2 #1}{{\partial #2}{\partial #3}}}
\newcommand{\inner}[1]{\left\langle #1\right\rangle}
\newcommand{\bb}[1]{\mathbb{#1}}
\newcommand{\ring}[1]{\mathring{#1}}
\newcommand{\cu}[1]{\mathcal{#1}}
\newcommand{\til}[1]{\widetilde{#1}}
\newcommand{\ol}[1]{\overline{#1}}
\newcommand{\mf}[1]{\mathfrak{#1}}
\newcommand{\Spec}{\mathrm{Spec}}
\def\Hom{\text{\rm Hom}}
\begin{document}
	
	\title[]{Toric vector bundles, non-abelianization, and spectral networks}

	\author[Suen]{Yat-Hin Suen}
	\address{Department of Mathematics\\ National Cheng Kung University\\No. 1, Dasyue Rd.\\ Tainan City 70101\\ Taiwan}
	\email{yhsuen@gs.ncku.edu.tw}

	\date{\today}

	\begin{abstract}
		Spectral networks and non-abelianization were introduced by Gaiotto-Moore-Neitzke and they have many applications in mathematics and physics. In a recent work by Nho, he proved that the non-abelianization of an almost flat local system over the spectral curve of a meromorphic quadratic differential is the same as the family Floer construction. Based on the mirror symmetry philosophy, it is then natural to ask how holomorphic vector bundles arise from spectral networks and non-abelianization. In this paper, we construct toric vector bundles on complete toric surfaces via spectral networks and non-abelianization arising from Lagrangian multi-sections. As an application, we deduce that the moduli space of rank 2 toric vector bundles over toric surfaces admit an $A$-type $\cu{X}$-cluster structure.
	\end{abstract}

	\maketitle
	
	\tableofcontents
	
\section{Introduction}

Let $X_{\Sigma}$ be the toric variety associated to a rational complete fan $\Sigma$ in a vector space $N_{\bb{R}}$. Denote by $T$ the dense algebraic torus in $X_{\Sigma}$. We assume $X_{\Sigma}$ is projective. The $T$-equivariant mirror of $X_{\Sigma}$ is given by the pair $(T^*M_{\bb{R}},\Lambda_{\Sigma})$, where
$$\Lambda_{\Sigma}:=\bigcup_{\tau\in\Sigma}(\tau^{\perp}+M)\times(-\tau)\subset T^*M_{\bb{R}}=M_{\bb{R}}\times N_{\bb{R}}.$$
This is a (singular) conical Lagrangian subset of $T^*M_{\bb{R}}$ with respect to the standard symplectic structure. By identifying $T^*M_{\bb{R}}$ with the interior of the closed unit disk bundle $D^*M_{\bb{R}}$ centred at the origin, we obtain a (singular) Legendrian subset
$$\Lambda_{\Sigma}^{\infty}:=\ol{\Lambda_{\Sigma}}\cap\partial(D^*M_{\bb{R}}).$$
The \emph{equivariant homological mirror symmetry} \cite{CCC_HMS, OS} stated that there is a quasi-equivalence
$$\cu{F}_{FLTZ}:\cu{F}uk(T^*M_{\bb{R}},\Lambda_{\Sigma})\xrightarrow{\simeq}\cu{P}erf_T(X_{\Sigma}),$$
where $\cu{F}uk(T^*M_{\bb{R}},\Lambda_{\Sigma})$ is the Fukaya category of exact/tautologically unobstructed Lagrangian submanifolds with $\Lambda_{\Sigma}$-admissibility condition $\bb{L}^{\infty}\subset\Lambda_{\Sigma}^{\infty}$ and $\cu{P}erf_T(X_{\Sigma})$ is the category of $T$-equivariant perfect complexes on $X_{\Sigma}$. We call $\cu{F}_{FLTZ}$ the \emph{FLTZ mirror functor}. In \cite{CCC_HMS}, the authors showed that $\cu{F}_{FLTZ}$ is compatible with SYZ \cite{SYZ} in the sense that it maps Lagrangian sections to toric line bundles. This has been generalized by Oh and the author of this paper in \cite{OS} that $\cu{F}_{FLTZ}$ maps Lagrangian multi-sections to toric vector bundles by using the microlocal characterization of toric vector bundles given in \cite{Morse_theory_TVB}.

In \cite{Suen_trop_lag}, the author introduced the notion of \emph{tropical Lagrangian multi-sections}\footnote{This tropical notion first appeared in \cite{branched_cover_fan, Suen_TP2} and was generalized to arbitrary integral affine manifolds with singularities equipped with polyhedral decomposition in \cite{CMS_k3bundle, Suen_TLMS_TLFS} to study the open reconstruction problem in the Gross-Siebert program \cite{GS1,GS2,GS11}} and studied the \emph{$B$-realization problem}. An $r$-fold tropical Lagrangian multi-section $\bb{L}^{\trop}$ over $\Sigma$ is an $r$-fold branched covering map $p:(L^{\trop},\Sigma_{L^{\trop}},\mu)\to(N_{\bb{R}},\Sigma)$ between cone complexes together with a piecewise linear function $\varphi^{\trop}:L^{\trop}\to\bb{R}$ with integral slopes. In \cite{branched_cover_fan}, Payne associated to a rank $r$ toric vector bundle $\cu{E}\to X_{\Sigma}$ an $r$-fold tropical Lagrangian multi-section $\bb{L}_{\cu{E}}^{\trop}$ over $\Sigma$, which we regard as the tropicalization of $\cu{E}$. The $B$-realization problem asks the converse of this assignment, namely, the existence of a toric vector bundle $\cu{E}$ whose tropicalization is prescribed. On the other hand, \cite{OS} studied the mirror counterpart. A tropical Lagrangian multi-section $\bb{L}^{\trop}$ determines a Lagrangian subset
$$\Lambda_{\bb{L}^{\trop}}:=\bigcup_{\tau'\in\Sigma_{L^{\trop}}}m(\tau')\times(-p(\tau'))\subset\Lambda_{\Sigma},$$
where $m(\tau')\in M/(\tau^{\perp}\cap M)$ is the slope of $\varphi^{\trop}$ along $\tau'$, and we view it as a coset of $\tau^{\perp}\cap M$. The \emph{$A$-realization problem} asks if there exists an unobstructed (in the sense of \cite{FOOO1, AJ}) $r$-fold Lagrangian multi-section $\bb{L}$ in $T^*M_{\bb{R}}$ for which it is \emph{$\Lambda_{\bb{L}^{\trop}}$-admissible}, i.e. $\bb{L}^{\infty}\subset\Lambda_{\bb{L}^{\trop}}^{\infty}$. The case of Lagrangian sections was solved in \cite{Abouzaid09} by Abouzaid. In dimension 2, Oh and the author provided a complete solution to the A-realizability of a 2-fold tropical Lagrangian multi-section \cite{OS}. The $A$-realization problem implies the $B$-realization problem via applying equivariant homological mirror symmetry. See also \cite{Matessi_Lag_pants, Mikhalkin_trop_to_Lag, Mak_Ruddat_trop_Lag_CY, Hicks_realization, Hicks_Trop_Lag_hyperseuface_unobs, Hicks_toric_del_Pezzo} for non-locally free realization problems.

\emph{Spectral networks} and \emph{non-abelianization} were introduced in \cite{Spectral_networks}. Given an $r$-fold branched covering map of real orientable surfaces $p:L\to C$ and a rank 1 local system $\cu{L}$ on $L$, one wants to define $p_*\cu{L}$ as a rank $r$ locally system on $C$. However, the naive pushforward $p_*\cu{L}$ receives non-trivial monodromies around the branch locus. A spectral network, roughly, is a collection of paths on $C$ that satisfies some combinatorial rules. These rules guide us on how to resolve these monodromies to result in a rank $r$ local system on $C$. The rank $r$ local system obtained in this way is called the \emph{non-abelianization of $\cu{L}$ with respect to the spectral network}. By the definition of spectral networks, non-abelianization gives more than a local system on $C$. It comes with natural decorations on the boundaries/punctures of $C$, called the \emph{flag data}. As we will see in Section \ref{sec:mirror_construction}, these flag data are the key to relating non-abelianizations, which are local systems, with toric vector bundles, which are coherent sheaves.

In \cite{Nho_spectral_network_family_fleor}, when $L\subset T^*C$ is the spectral curve of a meromorphic differential on $C$, Nho showed that non-abelianization of an almost-flat local system $\cu{L}$ on $L$ is the same as the \emph{family Floer assignment} $x\mapsto HF((L,\cu{L}),T_x^*C)$. Motivated by the work of Nho and mirror symmetry, it is very natural to ask how spectral networks and non-abelianization contribute to vector bundles. The main goal of this paper is to give \emph{explicit} construction of toric vector bundles on complete toric surfaces via spectral networks and non-abelianization. The main result of this paper is the following

\begin{theorem}[=Theorem \ref{thm:TVB_via_SN}]
    Let $\bb{L}^{\trop}$ be a tropical Lagrangian multi-section over $\Sigma$ and $\Delta$ be a polytope Legendre dual to the fan. Suppose there exists a spectral network $\cu{W}_{\bb{L}^{\trop}}$ on $\Delta$ subordinate to a simply branched covering map $p:L\to\Delta$ with flag data given by \eqref{eqn:slope_condition}. Then for any $\Bbbk^{\times}$-local system $\cu{L}$ on $L$, there exists a toric vector bundle $\cu{E}(\cu{W}_{\bb{L}^{\trop}},\cu{L})$ on $X_{\Sigma}$ with tropicalization $\bb{L}^{\trop}$. Furthermore, the assignment $\Psi_{\cu{W}}:\cu{L}\mapsto\cu{E}(\cu{W}_{\bb{L}^{\trop}},\cu{L})$ gives an injection from $\cu{M}(L,GL_1)$, the moduli space of rank 1 local systems on $L$, to $\cu{M}(X_{\Sigma},\bb{L}^{\trop})$, the moduli space of toric vector bundles with tropicalization $\bb{L}^{\trop}$.
\end{theorem}

To construct such spectral network, we begin with a 2-dimensional $\Lambda_{\bb{L}^{\trop}}$-admissible Lagrangian multi-section $\bb{L}$ in $T^*M_{\bb{R}}$, i.e. $\bb{L}^{\infty}\subset\Lambda_{\bb{L}^{\trop}}^{\infty}$, and define the notion of \emph{spectral network subordinate to $\bb{L}$} (Definition \ref{def:spectral_network_L}). Roughly speaking, such spectral network is obtained by applying Legendre transform to the flow lines of the gradient equations
\begin{equation}\label{eqn:grad_intro}
    \dot{\xi}(t)=\grad_{\ol{g}_{\phi}}(\varphi^{(\alpha)}-\varphi^{(\beta)})(\xi(t))
\end{equation}
with $\alpha,\beta=1,\dots,r$ being distinct, that emit from branched points and their intersection points. Here, $\varphi^{(\alpha)}$'s are local primitives of the Lagrangian multi-section $\bb{L}$ and $\ol{g}_{\phi}$ is some Riemannian metric on $N_{\bb{R}}$ that is flat around the branch points and is of Hessian-type outside a compact subset of $N_{\bb{R}}$. This gives a collection of walls $\cu{W}_{\bb{L}}$ on the polytope $\Delta$ of the toric variety. We will prove a key lemma (Lemma \ref{lem:boundary_wall}), which ensures that if $\cu{W}_{\bb{L}}$ is a spectral network, then it always respect the flag data given in \eqref{eqn:slope_condition}. As a consequence, we obtain

\begin{corollary}[=Corollary \ref{cor:TVB_via_SN}]
    Let $\bb{L}$ be an $r$-fold $\Lambda_{\bb{L}^{\trop}}$-admissible Lagrangian multi-section and suppose $\cu{W}_{\bb{L}}$ is a non-degenerated spectral network. Then for any rank 1 local system $\cu{L}$ on the domain $L$, there exists a rank $r$ toric vector bundle $\cu{E}(\bb{L},\cu{L})$ whose tropicalization is $\bb{L}^{\trop}$ and the assignment $\Psi_{\cu{W}_{\bb{L}}}:\cu{L}\mapsto\cu{E}(\bb{L},\cu{L})$ is an injection.
\end{corollary}

Because of the combinatorial rules of spectral networks given in \cite{Spectral_networks}, $\cu{W}_{\bb{L}}$ may not be a spectral network for general $\bb{L}$. Hence we should also ask ourselves, for what $\bb{L}$, a (non-degenerated) spectral network subordinate to $\bb{L}$ exists. We will introduce the notion of a \emph{well-behaved} (Definition \ref{def:well_behaved}) Lagrangian multi-sections. Straightly speaking, well-behavedness says that gradient lines of \eqref{eqn:grad_intro} can only terminate on $\partial\Delta$ after the Legendre transform. Let us also mention that a generic 2-fold Lagrangian multi-section with index 1 immersed double points is well-behaved. In particular, \cite[Theorem 5.2]{OS} provides many well-behaved Lagrangian multi-sections. This generic condition ensures $\cu{W}_{\bb{L}}$ is a spectral network.

\begin{theorem}[=Theorem \ref{thm:existence}]
    If $\bb{L}$ is a well-behaved $\Lambda_{\bb{L}^{\trop}}$-admissible Lagrangian multi-section with simple branching, then there exists a non-degenerated spectral network subordinate to $\bb{L}$.
\end{theorem}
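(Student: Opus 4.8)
The plan is to build the spectral network subordinate to $\bb{L}$ directly from the combinatorics of the branched covering $p:L^{\trop}\to N_{\bb{R}}$ and the geometry of the embedded Lagrangian. First I would set up the local model near each branch point. Since $\bb{L}$ is a $2$-fold Lagrangian multi-section, its branching is automatically simple: near each branch point the two sheets are exchanged, and locally $\bb{L}$ looks like the standard $A_1$ fold. Over such a point the naive pushforward $p_*\cu{L}$ acquires monodromy given (up to conjugation) by $\begin{pmatrix}0&1\\1&0\end{pmatrix}$ on the rank $2$ local system, and the job of the spectral network is to emanate $\cu{S}$-walls from this branch point which carry the unipotent "detour" data needed to cancel this monodromy. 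I would first record the finitely many rays of the fan $\Sigma$ that meet the image of each branch point, together with the sheet-permutation data of $p$ across each wall of $\Sigma_{L^{\trop}}$, since these constrain the allowed slopes of the $\cu{S}$-walls.

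Next I would give the explicit algorithm. Starting from each branch point, I propagate two $\cu{S}$-walls in the directions dictated by the local ordering of the two sheets of $\bb{L}$ (i.e. by the sign of the difference of the two linear functions defining the sheets of $\varphi^{\trop}$, read off in the chamber on each side of the fold). Each time an $\cu{S}$-wall crosses a wall of $\Sigma_{L^{\trop}}$ where the sheets are permuted, I relabel it accordingly; each time two $\cu{S}$-walls meet, I apply the usual spectral-network $2d$-$4d$ joining rule and spawn the resulting new walls. Because $r=2$, the only joining that can occur is the $[ij]+[ji]\to[ii]-[jj]$ type (closed loops) or $[ij]+[jk]\to[ik]$, and with only two sheets the latter degenerates, so the combinatorics terminates: I would prove termination by a discrete energy/area argument, using that $\bb{L}$ is embedded and $\Lambda_{\bb{L}^{\trop}}$-admissible so that the walls are constrained to finitely many asymptotic directions (those appearing in $\Lambda_{\Sigma}^{\infty}$) and cannot accumulate. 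Admissibility also guarantees that near infinity the walls run into the rays of $\Sigma$ in a controlled way, which is exactly the decoration-at-the-boundary condition required in Definition \ref{def:spectral_network_L}.

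The final step is to verify that the output satisfies the defining axioms of a spectral network subordinate to $\bb{L}$: that the concatenation of wall-crossing automorphisms and branch-point monodromies around every loop in $N_{\bb{R}}$ (equivalently, around every vertex of $\Sigma_{L^{\trop}}$ and every branch point) is trivial, so that the non-abelianized local system descends. This I would check vertex by vertex and branch point by branch point using the local models, reducing to a finite matrix identity in $GL_2$ at each one; the $r=2$ case makes every such identity a short computation with $2\times 2$ unipotent and permutation matrices.

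The main obstacle I expect is \emph{termination and consistency of the wall-propagation algorithm}: a priori new $\cu{S}$-walls can be generated indefinitely, and one must rule out infinite cascades and ensure the walls do not create inconsistent monodromy around the vertices of $\Sigma$. The key leverage is that everything is piecewise linear and the admissibility condition $\bb{L}^{\infty}\subset\Lambda_{\bb{L}^{\trop}}^{\infty}$ forces the asymptotic slopes of all walls to lie in the finite set determined by $\Sigma$; combined with embeddedness (which prevents walls from spiraling) this should bound the process. Making the "discrete area decreases under joining" bound precise, and handling the boundary behavior of walls that escape to infinity along a ray of $\Sigma$ versus those that terminate at another branch point, is where the real work lies.
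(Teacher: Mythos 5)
Your overall strategy --- seed walls at the branch points, propagate them, resolve crossings by the GMN joining rules, and argue termination by a discrete area bound --- is the generic approach, but it is not the one the paper takes, and as written it has a genuine gap exactly where you flag ``the real work'': neither termination of the cascade nor, more importantly, the matching with Condition (\ref{boundary_condition}) of Definition \ref{def:spectral_network_L} is actually established. The paper avoids propagation altogether. It invokes the classification of embedded $2$-fold $\Lambda_{\bb{L}^{\trop}}$-admissible Lagrangian multi-sections from \cite{OS}: such an $\bb{L}$ is governed by the $N$-genericity of $\bb{L}^{\trop}$ (Definition \ref{def:N_generic}), it has exactly $N-2$ branch points, and the slope condition assigns to each cell of $\mathrm{Bar}(\partial\Delta)$ a label $12$ or $21$ which flips precisely at the $N$ distinguished vertices $\check{\sigma}_1,\dots,\check{\sigma}_N$ and across branch cuts. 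The spectral network is then built by hand as $N-2$ embedded Y-shaped tripods, one per branch point, whose legs are boundary walls running directly to the correct barycentric cells; an inductive peeling over the steps $1,\dots,N-2$ shows the boundary labels are exhausted exactly when the branch points are. In particular the network the paper constructs has no joints at all, so there is no cascade to terminate and no monodromy consistency to re-verify beyond Theorem \ref{thm:consistent}.

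Two concrete problems with your version. First, your local model is off: a simple branch point of a $2$-fold cover emits \emph{three} walls with alternating labels, not two --- this is the first local model in Figure \ref{fig:joints} and is why the paper's building blocks are tripods. Second, and more seriously, the heart of the theorem is not the triviality of wall-crossing monodromy around loops (that is Theorem \ref{thm:nonabel}, which holds for any spectral network) but the \emph{existence} of a wall configuration whose endpoints satisfy $m(\sigma_w^{(\alpha)})-m(\sigma_w^{(\beta)})\in\rho_w^{\vee}\cap M$ at every point of $\partial\Delta$ they reach. You assert that admissibility ``guarantees the walls run into the rays in a controlled way,'' but admissibility only constrains $\bb{L}$ at infinity; it does not by itself show that the labels forced at the branch points can be routed so as to land on the boundary cells carrying the matching labels. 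That combinatorial matching --- $N$ boundary sign changes against $N-2$ branch points, which requires the quantitative input $N=d+2\geq 3$ from \cite{OS} --- is the actual content of the proof and is absent from your argument.
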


Combining Corollary 1.2 and Theorem 1.3, we obtain

\begin{corollary}[=Corollary \ref{cor:well_behave_TVB}]
    If $\bb{L}$ is a well-behaved $r$-fold $\Lambda_{\bb{L}^{\trop}}$-admissible Lagrangian multi-section, then for any $\Bbbk^{\times}$-local system $\cu{L}$ on the domain $L$, there exists a rank $r$ toric vector bundle $\cu{E}(\bb{L},\cu{L})$ whose tropicalization is $\bb{L}^{\trop}$ and the assignment $\Psi_{\cu{W}_{\bb{L}}}:\cu{L}\mapsto\cu{E}(\bb{L},\cu{L})$ is an injection.   
\end{corollary}

In Section \ref{sec:existence}, we will give some examples on non-degenerated spectral networks based the work by Oh and the author \cite{OS} and give two applications to our construction, one on the $A$-side and one on the $B$-side. The first application is an enhancement on \cite[Corollary 5.16]{OS}.

\begin{theorem}[=Theorem \ref{thm:A_real}]
    A 2-fold tropical Lagrangian multi-section can be realized by a connected embedded exact Lagrangian multi-section in $T^*M_{\bb{R}}$ if and only if $N_{\bb{L}^{\trop}}\geq 3$.
\end{theorem}

The number $N_{\bb{L}^{\trop}}$ was introduced to any 2-dimensional 2-fold tropical Lagrangian multi-section $\bb{L}^{\trop}$ by Oh and the author in \cite{OS}, where we proved that $N_{\bb{L}^{\trop}}\geq 3$ is a sufficient condition for $\bb{L}^{\trop}$ to be realized by an embedded Lagrangian multi-section, and when $N_{\bb{L}^{\trop}}$ is odd, the converse is true by applying mirror symmetry. Theorem 1.3 proves that $N_{\bb{L}^{\trop}}\geq 3$ is always necessary for embedded A-realizability \emph{without} using mirror symmetry.

The second application is the discovery of cluster structure on the moduli space $\cu{M}(X_{\Sigma},\bb{L}^{\trop})$ of rank 2 toric vector bundles over toric surface $X_{\Sigma}$ with fixed tropicalization $\bb{L}^{\trop}$. Cluster structure was introduced by Fomin-Zelevinsky \cite{FZ_Cluster1, FZ_Clustert2, BFZ_Cluster3}. Gaiotto-Moore-Neitzke pointed out (at least when $r=2$) in \cite{Spectral_networks} cluster transformations can be understood as ``connecting" two non-degenerated spectral networks via a degenerated one. We, therefore, expect $\Psi_{\cu{W}_{\bb{L}}}:\cu{M}(L,GL_1)\to\cu{M}(X_{\Sigma},\bb{L}^{\trop})$ are cluster charts.

\begin{theorem}[=Theorem \ref{thm:cluster}]
    Let $\bb{L}^{\trop}$ be a 2-dimensional 2-fold tropical Lagrangian multi-section with $N_{\bb{L}^{\trop}}\geq 3$. Then the moduli space $\cu{M}(X_{\Sigma},\bb{L}^{\trop})$ is an $A_{N_{\bb{L}^{\trop}}-3}$-type $\cu{X}$-cluster variety.
\end{theorem}

\subsection*{Organization} We begin by reviewing the notions of toric vector bundles, tropical Lagrangian multi-sections, and Lagrangian multi-sections in Section \ref{sec:prelim}. We then introduce the notion of spectral networks subordinate to a $\Lambda_{\bb{L}^{\trop}}$-admissible Lagrangian multi-section in Section \ref{sec:spec_net_L} and prove Theorem 1.3. We will recall the construction of the non-abelianization map in Section \ref{sec:non_abel} and prove Theorem 1.1 in Section \ref{sec:mirror_construction}. In Section \ref{sec:existence}, we give several examples of spectral networks subordinate to those 2-fold Lagrangian multi-sections constructed in \cite{OS}, one non-example, and one 3-fold example. We end this paper by proving Theorem 1.3 and 1.4.

\subsection*{Acknowledgment}
The author would like to thank the anonymous referees for their useful comments and suggestions. The author is grateful to Yoon Jae Nho for his comments and fruitful discussions, and would also like to thank Mark Gross and Yong-Geun Oh for their interest in this work.

\section{Toric vector bundles via spectral networks and non-abelianization}

Fix a field $\Bbbk$. Let $N=\bb{Z}^n$, $M=\Hom_{\bb{Z}}(N,\bb{Z})$, $N_{\bb{R}}=N\otimes_{\bb{Z}}\bb{R}$, and $M_{\bb{R}}=M\otimes_{\bb{Z}}\bb{R}$. Let $\Sigma$ be a complete rational fan on $N_{\bb{R}}$ and $X_{\Sigma}$ be the associated complete toric variety over $\Bbbk$. Denote the dense algebraic torus in $X_{\Sigma}$ by $T$ and for a cone $\sigma\in\Sigma$, we denote by $U(\sigma):=\Spec(\Bbbk[\sigma^{\vee}\cap M])$ the corresponding affine chart of $X_{\Sigma}$.

\subsection{Objects of interest}\label{sec:prelim}

We begin by introducing the three objects that we are interested in this work.

\begin{definition}
    A vector bundle $\cu{E}$ on $X_{\Sigma}$ is called \emph{toric} if the $T$-action on $X_{\Sigma}$ lifts to an action on $\cu{E}$ which is linear on fibers.
\end{definition}

Let $\sigma\in\Sigma$ be a maximal cone. On the affine chart $U(\sigma)$, $\cu{E}$ admits a $T$-equivariant trivialization $\cu{E}|_{U(\sigma)}\cong\cu{O}_{U(\sigma)}^{\oplus r}$ so that with respect to an equivariant frame $\{1^{(\alpha)}(\sigma)\}_{\alpha=1}^r$, the $T$-action is given by
$$t \cdot 1^{(\alpha)}(\sigma)=t^{m^{(\alpha)}(\sigma)}1^{(\alpha)}(\sigma),$$
for some $m^{(\alpha)}(\sigma)\in M$, $\alpha=1,\dots,r$, and $t\in T$. The $T$-equivariant structure constrains the transition maps of $\cu{E}$. Indeed, if $\sigma_1,\sigma_2$ are two maximal cones in $\Sigma$, then with respective to the above $T$-equivariant frames, the transition map from the affine chart $U(\sigma_1)$ to the affine chart $U(\sigma_2)$ is given by
$$G_{\sigma_1\sigma_2}:=\left(g_{\sigma_1\sigma_2}^{(\alpha\beta)}z^{m^{(\alpha)}(\sigma_1)-m^{(\beta)}(\sigma_2)}\right)_{\alpha\beta},$$
where $g_{\sigma_1\sigma_2}^{(\alpha\beta)}$ are constants in the underlying field subordinate to the condition that $g_{\sigma_1\sigma_2}^{(\alpha\beta)}\neq 0$ only if $$m^{(\alpha)}(\sigma_1)-m^{(\beta)}(\sigma_2)\in(\sigma_1\cap\sigma_2)^{\vee}\cap M.$$

\begin{definition}
    A \emph{tropical Lagrangian multi-section $\bb{L}^{\trop}$ over a fan $\Sigma$} is a branched covering map $p:(L^{\trop},\Sigma_{L^{\trop}},\mu)\to(N_{\bb{R}},\Sigma)$ of connected cone complexes \cite[Definition 2.17]{branched_cover_fan} together with a piecewise linear function $\varphi^{\trop}:L^{\trop}\to\bb{R}$ with rational slopes, where $\mu:\Sigma_{L^{\trop}}\to\bb{Z}_{>0}$ is the multiplicity map. A cone $\tau'\in\Sigma_{L^{\trop}}$ is said to be \emph{ramified} if $\mu(\tau')>1$. A tropical Lagrangian is said to be \emph{separated} if for any ray $\rho\in\Sigma(1)$ and distinct lifts $\rho^{(\alpha)},\rho^{(\beta)}\in\Sigma_{L^{\trop}}(1)$ of $\rho$, we have $\varphi^{\trop}|_{\rho^{(\alpha)}}\neq \varphi^{\trop}|_{\rho^{(\beta)}}$.
\end{definition}

In \cite{branched_cover_fan}, Payne associated a tropical Lagrangian multi-section $\bb{L}_{\cu{E}}^{\trop}$ over $\Sigma$ to a toric vector bundle $\cu{E}\to X_{\Sigma}$, which we now recall. Let $\sigma\in\Sigma$ and $U(\sigma)$ be the corresponding affine chart. The toric vector bundle splits equivariantly on $U(\sigma)$ as
$$\cu{E}|_{U(\sigma)}\cong\bigoplus_{m(\sigma)\in\textbf{m}(\sigma)}\cu{L}_{m(\sigma)},$$
where $\textbf{m}(\sigma)\subset M(\sigma):=M/(\sigma^{\perp}\cap M)$ is a multi-set and $\cu{L}_{m(\sigma)}$ is the line bundle corresponds to the linear function $m(\sigma)\in M(\sigma)$. Payne's construction is as follows. Let $|\Sigma|\to\Sigma$ be the map given by mapping $x\in|\Sigma|$ to the unique cone $\sigma\in\Sigma$ such that $x\in \Int(\sigma)$. Equip $\Sigma$ with the quotient topology. Define
$$\Sigma_{\cu{E}}:=\{(\sigma,m(\sigma))\,|\,\sigma\in\Sigma, m(\sigma)\in\textbf{m}(\sigma)\}$$
and let $\Sigma_{\cu{E}}\to\Sigma$ be the projection 
$$(\sigma,m(\sigma))\mapsto\sigma.$$
Equip $\Sigma_{\cu{E}}$ a poset structure
$$(\sigma_1,m(\sigma_1))\leq (\sigma_2,m(\sigma_2))\Longleftrightarrow\sigma_1\subset\sigma_2\text{ and }m(\sigma_2)|_{\sigma_1}=m(\sigma_1)$$
and equip it with the poset topology. Define
$$L_{\cu{E}}^{\trop}:=|\Sigma|\times_{\Sigma}\Sigma_{\cu{E}}.$$
Let the set of cones on $L_{\cu{E}}^{\trop}$ be $\Sigma\times_{\Sigma}\Sigma_{\cu{E}}\cong\Sigma_{\cu{E}}$. The multiplicity $\mu_{\cu{E}}:L_{\cu{E}}^{\trop}\to\bb{Z}_{>0}$ is defined by
$$\mu_{\cu{E}}(\sigma,m(\sigma)):=\text{number of times that }m(\sigma)\text{ appears in }{\bf{m}}(\sigma).$$
The projection map $p_{\cu{E}}:L_{\cu{E}}^{\trop}\to|\Sigma|$ then induces a rank $r$ branched covering map of cone complexes $p_{\cu{E}}:(L_{\cu{E}}^{\trop},\Sigma_{\cu{E}},\mu_{\cu{E}})\to(N_{\bb{R}},\Sigma)$. The piecewise linear function $\varphi_{\cu{E}}^{\trop}:L_{\cu{E}}^{\trop}\to\bb{R}$ is tautologically given by
$$\varphi_{\cu{E}}^{\trop}|_{(\sigma,m(\sigma))}:=p_{\cu{E}}^*m(\sigma).$$
This gives a tropical Lagrangian multi-section $\bb{L}_{\cu{E}}^{\trop}:=(L_{\cu{E}}^{\trop},\Sigma_{\cu{E}},\mu_{\cu{E}},p_{\cu{E}},\varphi_{\cu{E}}^{\trop})$ and we call it the \emph{tropicalization of $\cu{E}$}. It was also shown in \cite{Suen_trop_lag} that the tropicalization of a toric vector bundle must be separated. From now on, we assume all tropical Lagrangian multi-sections in this paper are separated and all ramified cones are of codimension 2.

As mentioned in the introduction, given a tropical Lagrangian multi-section $\bb{L}^{\trop}$ over $\Sigma$, the $B$-realization problem asks if there exits a toric vector bundle $\cu{E}\to X_{\Sigma}$ so that $\bb{L}_{\cu{E}}^{\trop}=\bb{L}^{\trop}$. For each cone $\tau'\in\Sigma_{L^{\trop}}$, we denote by $m(\tau')\in M/(p(\tau')^{\perp}\cap M)$ the slope of $\varphi^{\trop}$ along $\tau'$. The tropical Lagrangian multi-section $\bb{L}^{\trop}$ is B-realizable if we can find a collection of constants $(g_{\sigma_1^{(\alpha)}\sigma_2^{(\beta)}})_{\sigma_1^{(\alpha)},\sigma_2^{(\beta)}\in\Sigma_{L^{\trop}}(n)}$ so that
$$G_{\sigma_1\sigma_2}:=\left(g_{\sigma_1^{(\alpha)}\sigma_2^{(\beta)}}z^{m(\sigma_1^{(\alpha)})-m(\sigma_2^{(\beta)})}\right)_{\alpha\beta}\in GL(r,\Bbbk[(\sigma_1\cap\sigma_2)^{\vee}\cap M])$$
and satisfy the cocycle condition
$$G_{\sigma_3\sigma_1}G_{\sigma_2\sigma_3}G_{\sigma_1\sigma_2}=\Id.$$
The data $(g_{\sigma_1^{(\alpha)}\sigma_2^{(\beta)}})_{\sigma_1^{(\alpha)},\sigma_2^{(\beta)}\in\Sigma_{L^{\trop}}(n)}$ is called a \emph{Kaneyama's data} as it was first introduced by Kaneyama \cite{Kaneyama_classification} to classify toric vector bundles on toric varieties.

We now move to the $A$-side. By the SYZ philosophy \cite{SYZ, LYZ, CS_SYZ_imm_Lag}, the mirror of vector bundles should be realized by Lagrangian multi-sections. We recall that by choosing a metric, one can identify $T^*M_{\bb{R}}$ with the interior of the closed unit disk bundle $D^*M_{\bb{R}}$ and for any subset $S\subset T^*M_{\bb{R}}$, we define $S^{\infty}:=\ol{S}\cap\partial(D^*M_{\bb{R}})$.

\begin{definition}
    An \emph{$r$-fold Lagrangian multi-section $\bb{L}$ of $T^*M_{\bb{R}}$} is a graded exact (with respect to the canonical 1-form $\lambda_{M_{\bb{R}}}$ on $T^*M_{\bb{R}}$) Lagrangian immersion $i:L\to T^*M_{\bb{R}}$ such that the composition $p_L:=p_{N_{\bb{R}}}\circ i:L\to N_{\bb{R}}$ is a branched covering map of degree $r$, where $p_{N_{\bb{R}}}:T^*M_{\bb{R}}=M_{\bb{R}}\times N_{\bb{R}}\to N_{\bb{R}}$ is the natural projection. By abuse of notation, we use $\bb{L}$ to denote the image $i(L)$. Denote by $S\subset N_{\bb{R}}$ and $S'\subset L$ the branch locus and ramification locus of $p_L$, respectively. Let $\bb{L}^{\trop}$ be a tropical Lagrangian multi-section over a complete fan $\Sigma$ and define
    $$\Lambda_{\bb{L}^{\trop}}:=\bigcup_{\tau'\in\Sigma_{L^{\trop}}}m(\tau')\times(-p(\tau'))\subset\Lambda_{\Sigma}.$$
    A Lagrangian multi-section is said to be \emph{$\Lambda_{\bb{L}^{\trop}}$-admissible} if $\bb{L}^{\infty}\subset\Lambda_{\bb{L}^{\trop}}^{\infty}$.
\end{definition}

Given a tropical Lagrangian multi-section $\bb{L}^{\trop}$ over a complete fan $\Sigma$. The \emph{$A$-realization problem} asks if there exists a tautologically unobstructed $\Lambda_{\bb{L}^{\trop}}$-admissible Lagrangian multi-section. Note that if we don't require unobstructedness, one can easily construct many $\Lambda_{\bb{L}^{\trop}}$-admissible immersed Lagrangian multi-sections in $T^*M_{\bb{R}}$ by smoothing the corners of $\varphi^{\trop}$.

By using the microlocal criterion for toric vector bundles given in \cite{Morse_theory_TVB}, the authors of \cite{OS} showed that a tautologically unobstructed $\Lambda_{\bb{L}^{\trop}}$-admissible Lagrangian multi-section (with certain grading) has equivariant mirror being a toric vector bundle over $X_{\Sigma}$ whose tropicalization is $\bb{L}^{\trop}$. Moreover, they also solved the $A$-realization problem for 2-fold tropical Lagrangian multi-sections over any 2-dimensional complete fan hence solving the $B$-realization problem via the A-realizability of $\bb{L}^{\trop}$ in this case by applying equivariant homological mirror symmetry. However, the construction of the mirror bundle is not explicit. The main goal of this paper is to construct Kaneyama's data explicitly via spectral networks and non-abelianization. The resulting toric vector bundle is expected to be compatible with equivariant mirror symmetry.

\subsection{Spectral networks subordinate to Lagrangian multi-sections}\label{sec:spec_net_L}

We recall the definition of spectral network subordinate to a branched covering map $p:L\to C$ of orientable surfaces. %To fit into our content, we allow $\partial C$ to have corners. The definition has some small modifications from the one defined in \cite{Spectral_networks}.
Fix a set of disjoint branch cuts $\{c_x\}_{x\in S}$. Let
$$U:=C-\bigsqcup_{x\in S}c_x$$
and write
$$p^{-1}(U):=\bigsqcup_{\alpha=1}^rU^{(\alpha)},$$
so that $p|_{U^{(\alpha)}}:U^{(\alpha)}\to U$ is a homeomorphism.

\begin{definition}\label{def:spectral_network}
    Let $p:L\to C$ be a branched covering map of orientable surfaces with simple branching. A \emph{spectral network $\cu{W}$ subordinate to $p:L\to C$} is a countable collection of oriented smooth embeddings $\{w:[0,1]\to C\}$, called \emph{walls}, together with a collection of points $\mf{s}\subset\partial C$, called the \emph{flag data}, satisfying the following conditions.
    \begin{enumerate}   
        \item \label{con:wall} For any wall $w$, the \emph{relative interior} $\Int(w):=w((0,1))$ satisfies $\Int(w)\subset U$ and all relative interiors of walls are disjoint. Moreover, every compact subset of $C-S$ intersects finitely many walls.%Every compact subset of $\ring{\Delta}-d\phi(S)$ intersects finitely many walls. In particular, the boundary $\partial\Delta$ only intersects finitely many walls.
        \item Each wall carries a label $\alpha\beta$, with $\alpha\neq\beta$ representing the sheets $U^{(\alpha)},U^{(\beta)}\subset L$.%, and reversing the orientation of a wall results in reserving the order of the label.
        \item \label{con:local_model} At each branch point $p$ for which the sheets $U^{(\alpha)},U^{(\beta)}$ comes together, there are three walls emitting out from $p$ as shown in Figure \ref{fig:branched_point}.
        \begin{figure}[H]
			\centering
			\includegraphics[width=30mm]{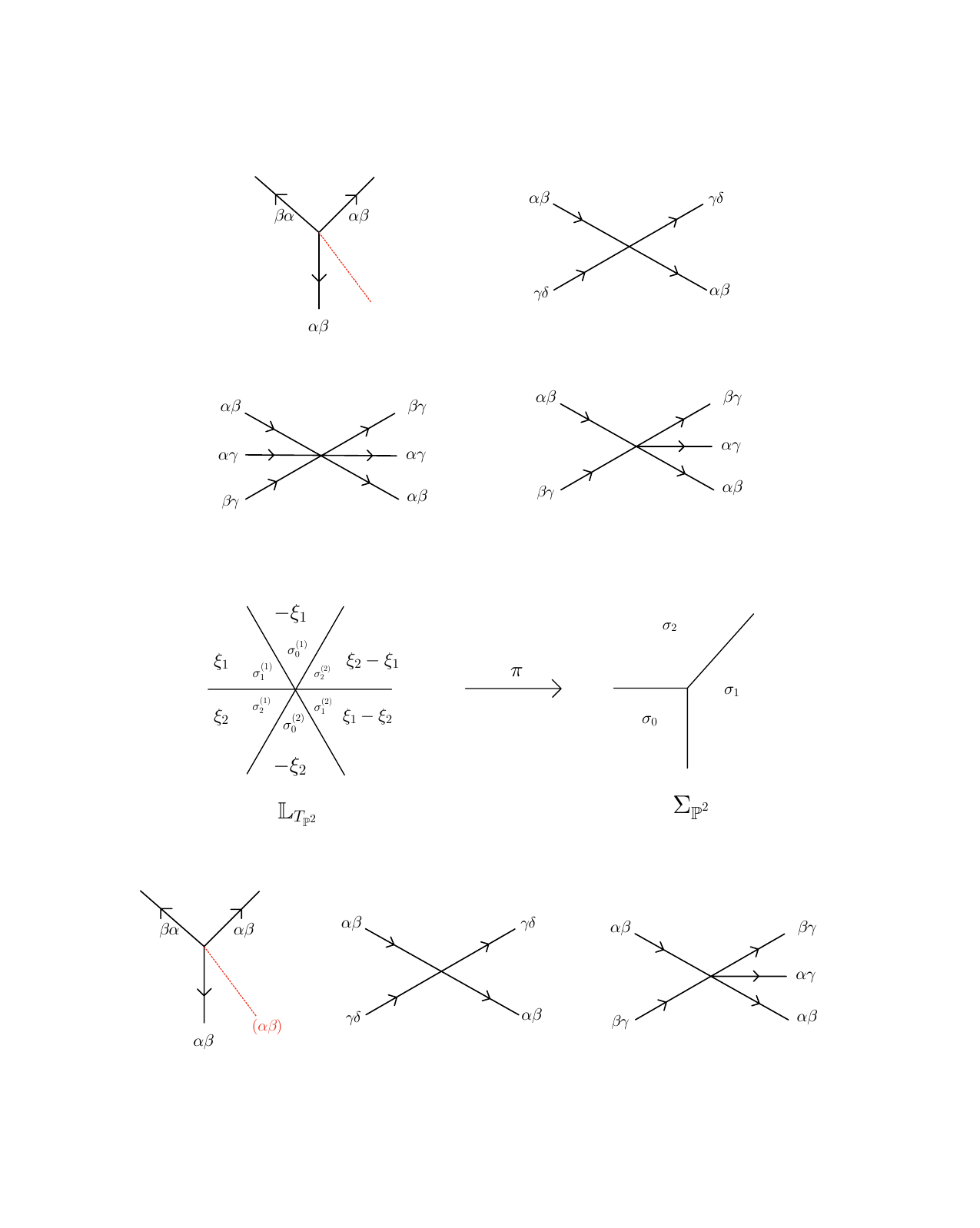}
			\caption{The local model around an $(\alpha\beta)$-branched point. The red dotted line represents the branch cut that we label by $(\alpha\beta)$ to remember the branched point is formed by gluing $U^{(\alpha)},U^{(\beta)}$.}
			\label{fig:branched_point}
		\end{figure}
        If a boundary point of a wall lies in $U$, there is a neighbourhood around it so that it looks like Figure \ref{fig:joint_1} or Figure \ref{fig:joint_2}
		\begin{figure}[H]
			\centering
			\includegraphics[width=40mm]{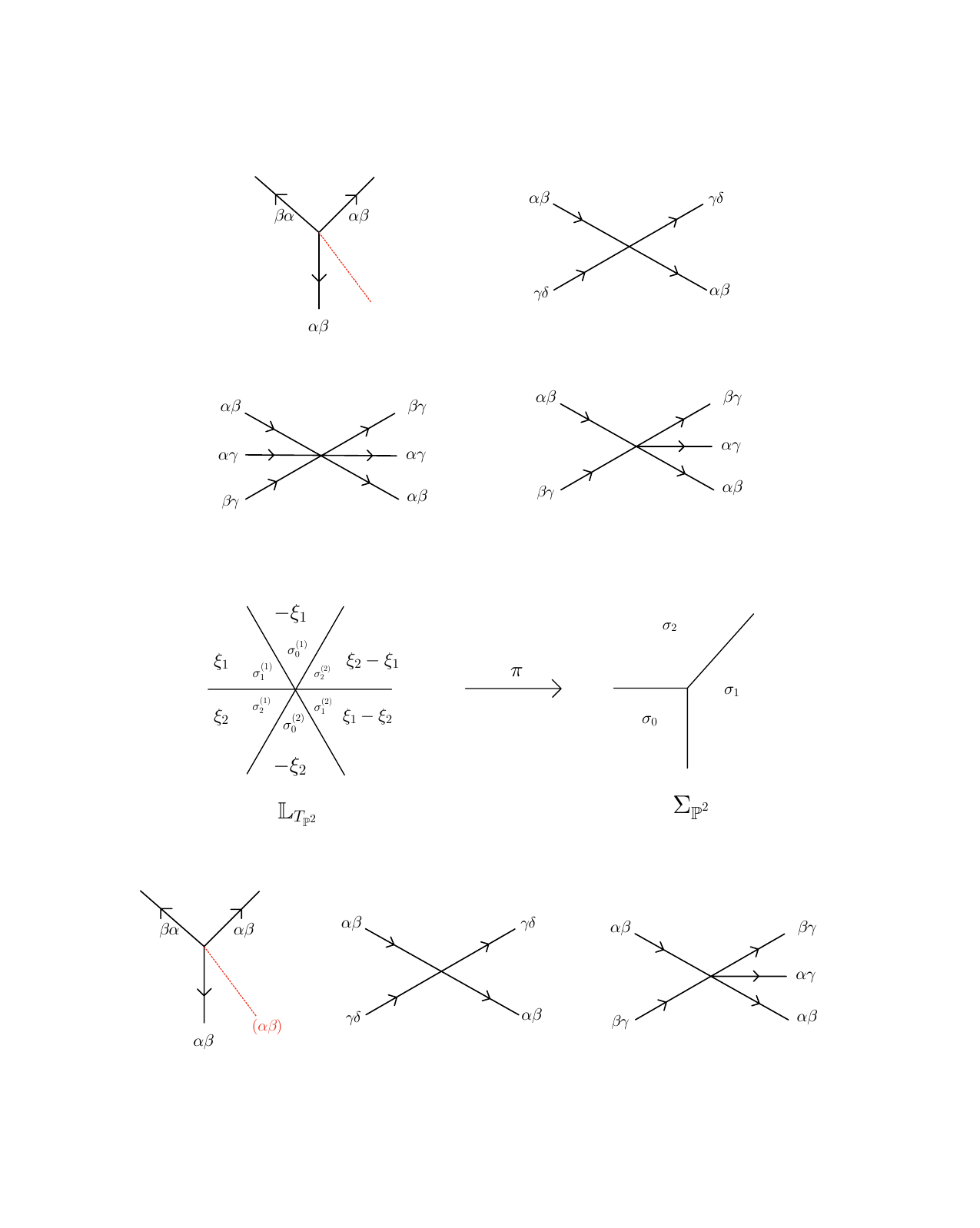}
			\caption{The joint where $\alpha\neq\delta,\beta\neq\gamma$.}
			\label{fig:joint_1}
		\end{figure}
		\begin{figure}[H]
			\centering
			\includegraphics[width=40mm]{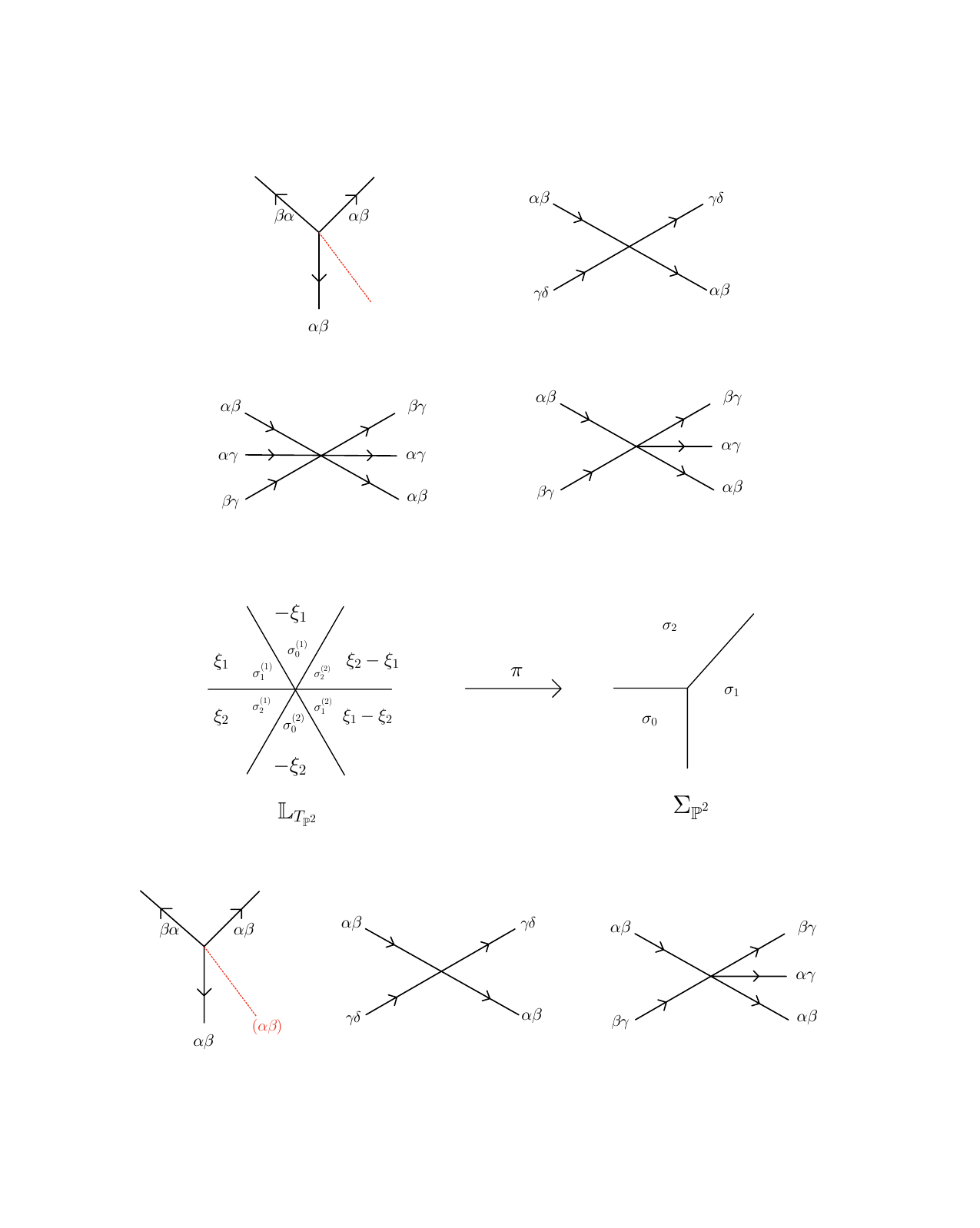}
			\caption{The joint where $\alpha\neq\beta,\beta\neq\gamma$ and $\alpha\neq\gamma$.}
			\label{fig:joint_2}
		\end{figure}
        \item \label{con:branch_change} If $w_1,w_2$ are walls so that $w_1(1)=w_2(0)$ lies on a branch cut of type $(\alpha\beta)$ and the label on $w_1$ involves $\alpha$ or $\beta$, then the label on $w_2$ is obtained from the one in $w_1$ by turning $\alpha$ to $\beta$ and $\beta$ to $\alpha$. %A wall $w$ is said to be an \emph{ancestor} of $w'$ if there exists a sequence of walls $w=w_1,w_2,\dots,w_k=w'$ such that $w_i(1)=w_{i+1}(0)$ and they either carry the same label or their labels are related by the above branch change. We also say $w'$ is a \emph{decedent} of $w$.
        \item \label{con:poset} For each $x\in\mf{s}$, there is neighbourhood $U_x\subset C$ and a partial ordering $<_x$ on the set of sheets $p^{-1}(U_x)$ of $U_x$ so that if $w$ is a wall with label $\alpha\beta$ and $w(1)=x$, then $\alpha<_x\beta$.
        %\item \label{local_finiteness} We call those boundary points in $d\phi(U)$ of an interior wall \emph{joints} and the set of joints is assumed to be locally finite on $\ring{\Delta}-d\phi(S)$.
        %\item \label{boundary_condition} If the boundary point $w(1)$ of a wall $w$ lies on $\partial\Delta$, we call it an \emph{endpoint}. A wall that contains an endpoint is called a \emph{boundary wall}.
    \end{enumerate}
    A wall that lies completely in $C-\partial C$ is called an \emph{interior wall} and we call those boundary points in $U$ of an interior wall \emph{joints}. The set of joints is denoted by $\mathrm{Joints}(\cu{W})$. If a boundary point of a wall lies on $\partial C$, we call it an \emph{endpoint}. A wall that contains an endpoint is called a \emph{boundary wall}. %Let $w$ be a wall without ancestors. Then set  
    %$$\mf{d}_{w(0)}:=w([0,1])\cup\bigcup_{w'\text{ is a descendent of w}}w'([0,1])$$
    %is called a \emph{ray} with based point $w(0)$.
    The set
    $$|\cu{W}|:=\bigcup_{w\in\cu{W}}w([0,1])$$
    is called the \emph{support} of
    $\cu{W}$. A spectral network subordinate to $p:L\to C$ is said to be \emph{non-degenerated} if each wall has at most one branch point. Otherwise, it is said to be \emph{degenerated}.
\end{definition}

%\begin{remark}
 %   The only differences between our definition and the one given in \cite{Spectral_networks} are
  %  \begin{enumerate}
  %      \item our spectral networks are not necessarily locally finite near the boundary $\partial C$,
   %     \item we allow $\partial C$ to have corners and the partial orderings $<_x$ in Condition \eqref{con:poset} are \emph{at} $x$ but not a neighbourhood of it.
   % \end{enumerate}
  %  The main reason for these modifications is that our spectral network is going to be defined on a polytope. Nevertheless, these mild modifications won't affect the non-abelianization process given in \cite{Spectral_networks} as long as $\cu{W}$ is locally finite on $C-\partial C$.
%\end{remark}

From now on, we assume $\Sigma$ is a smooth complete rational fan in $N_{\bb{R}}\cong\bb{R}^2$. Recall that the associated toric surface $X_{\Sigma}$ must be projective. Choose a strictly convex piecewise linear function $\phi^{\trop}:(N_{\bb{R}},\Sigma)\to\bb{R}$ and let
$$\Delta:=\{x\in M_{\bb{R}}:\inner{x,v_{\rho}}\leq\phi^{\trop}(v_{\rho}),\forall\rho\in\Sigma(1)\}\subset M_{\bb{R}}$$
be the corresponding polytope. By adding a linear function to $\varphi^{\trop}$ if necessary, we may assume $0$ is an interior lattice point of $\Delta$, and so $\phi^{\trop}(v_{\rho})>0$ for all $\rho\in\Sigma(1)$. We orient $\Delta$ in the anti-clockwise direction. For each cone $\tau\in\Sigma$, let $\check{\tau}\subset\Delta$ be the cell dual to $\tau$, that is
$$\check{\tau}:=\{x\in \Delta:\inner{x,v_{\rho}}=\phi^{\trop}(v_{\rho}),\forall \rho\subset\tau\}.$$
Define
$$\phi(\xi):=\frac{1}{2}\log\left(\sum_{m\in\Delta\cap M}e^{2\inner{m,\xi}}\right).$$
By abusing the notation, we denote the composition of $d\phi:N_{\bb{R}}\to T^*N_{\bb{R}}=N_{\bb{R}}\times M_{\bb{R}}$ with the projection $N_{\bb{R}}\times M_{\bb{R}}\to M_{\bb{R}}$ by $d\phi$. It is well-known that $d\phi$ identifies $N_{\bb{R}}$ and the interior $\ring{\Delta}$ of $\Delta$. We call $d\phi:N_{\bb{R}}\to\ring{\Delta}$ the \emph{Legendre transform}. %Let $\rho\in\Sigma(1)$ be a ray and $v_{\rho}$ its primitive generator. The curve  $t\mapsto d\phi(tv_{\rho})$ $,t\geq 0$ begins at the barycentre of $\Delta$ and intersects the barycentre of $\check{\rho}$ as $t\to\infty$. We denote by $\mathrm{Bar}(\partial\Delta)$ the set of cells in the first barycentric decomposition of $\partial\Delta$ and by $d\phi(\Sigma)$ the collection of image of cones in $\Sigma$ under $d\phi$. We also denote by $|d\phi(\Sigma(1))|\subset\ring{\Delta}$ the union of all the rays in $d\phi(\Sigma)$.
The function $\phi$ also gives a Hessian metric on $N_{\bb{R}}$:
$$g_{\phi}:=\sum_{i,j=1}^2\secpd{\phi}{\xi_i}{\xi_j}d\xi_i\otimes d\xi_j.$$
For a function $f$ defined on an open subset of $N_{\bb{R}}$, we write $\mathrm{grad}_{\phi}(f)$ the gradient of $f$ with respect to the metric $g_{\phi}$.

Let $\bb{L}^{\trop}$ be an $r$-fold tropical Lagrangian multi-section over $\Sigma$ and $\bb{L}$ be a $\Lambda_{\bb{L}^{\trop}}$-admissible (possibly immersed) Lagrangian multi-section with finitely many branch points. Let $\{c_{\xi}\}_{\xi\in S}$ be a set of disjoint branch cuts of $p:L\to N_{\bb{R}}$ so that
$$U:=N_{\bb{R}}-\bigsqcup_{\xi\in S}c_{\xi}$$
is contractible. Write
$$p_L^{-1}(U)=\bigsqcup_{\alpha=1}^rU^{(\alpha)},$$
where $U^{(\alpha)}\subset L$ are open subsets such that $p_L|_{U^{(\alpha)}}:U^{(\alpha)}\to U$ are diffeomorphisms. For each maximal cone $\sigma\in\Sigma(2)$, the open subset $\Int(\sigma)\cap U$ has a unique lift $U_{\sigma}^{(\alpha)}:=p_L^{-1}(\sigma)\cap U^{(\alpha)}$. The asymptotic condition $\bb{L}^{\infty}\subset\Lambda_{\bb{L}^{\trop}}^{\infty}$ then determines a lift $\sigma^{(\alpha)}\in\Sigma_{L^{\trop}}(2)$ of $\sigma$ for which
$$i(U_{\sigma}^{(\alpha)})^{\infty}\subset\{m(\sigma^{(\alpha)})\}\times(-\sigma^{\infty}).$$
Moreover, as $U$ is contractible, there exist smooth functions $\varphi^{(\alpha)}:U\to\bb{R}$ so that
$$i(U^{(\alpha)})=\{(d\varphi^{(\alpha)}(\xi),\xi)\in M_{\bb{R}}\times N_{\bb{R}}:\xi\in U\}.$$
We are interested in the gradient flow equations
\begin{equation}\label{eqn:grad_flow}
    \dot{\xi}(t)=\grad_{\phi}(\varphi^{(\alpha)}-\varphi^{(\beta)})(\xi(t)),
\end{equation}
for $\alpha,\beta=1,\dots,r$ being distinct. Let $I_{\xi}:=(0,a^+)$ be the interior the maximal domain of definition of a solution $\xi(t)$ to \eqref{eqn:grad_flow} with initial position $\xi(0^-)=\xi_0\in N_{\bb{R}}$. We are also interested in the Legendre transform $x(t):=d\phi(\xi(t))$ of their solutions, whose maximal domain of definition $I_x$ is of course $I_{\xi}$. It satisfies
\begin{equation}\label{eqn:LT_grad_flow}
    \dot{x}_i(t)=\pd{}{\xi_i}(\varphi^{(\alpha)}-\varphi^{(\beta)})(d\phi^{-1}((x(t)))),
\end{equation}
for $i=1,2$.
\begin{definition}\label{def:LT_line}
    We call a solution $x:[0,a^+)\to\mathring{\Delta}$ to \eqref{eqn:LT_grad_flow} an \emph{$\alpha\beta$-LT line} (LT stands for Legendre transform).
\end{definition}

%It is a general fact that if a smooth function $f$ is generic enough, no $f$-gradient lines are closed orbits or will spiral and accumulate. From now on, throughout the whole article, we make the following

%\begin{assumption}\label{ass:generic}
%    The potentials $\varphi^{(\alpha)}$'s are generic enough so that the gradient flow \eqref{eqn:grad_flow} has no closed orbits and the limits
 %   $$x(a^{\pm}):=\lim_{t\to a^{\pm}}x(t)$$
 %   exists in $\Delta$ for any LT line $x(t)$.
%\end{assumption}

Let $x:[0,a^+)\to\Delta$ be an $\alpha\beta$-LT line. If
$$x(a^+):=\lim_{t\to a^+}x(t).$$
exists in $\Delta$ and $x(a^+)\in\check{\rho}\subset\partial\Delta$ for some $\rho\in\Sigma(1)$, then by $\Lambda_{\bb{L}^{\trop}}$-admissibility, the limit
$$\lim_{t\to a^+}\dot{x}(t)$$
also exists and lies in the coset $m(\sigma^{(\alpha)})-m(\sigma^{(\beta)})+\rho^{\perp}\subset M_{\bb{R}}$, for any maximal cone $\sigma\in\Sigma$ containing $\rho$. We remark that this coset is independent of $\sigma$.

%By $\Lambda_{\bb{L}^{\trop}}$-admissibility, if $\xi(t)$ escape to infinity so that $x(t)$ hit the edge $\check{\rho}\subset\partial\Delta$ as $t\to a^+$, then the limit
%$$\dot{x}(a^+):=\lim_{t\to a^+}\dot{x}(t)$$
%exists and lies in $m(\sigma^{(\alpha)})-m(\sigma^{(\beta)})+\rho^{\perp}$ for some $\sigma\in\Sigma(2)$ contains $\rho$. After reparametrization, we can then keep propagating it along this direction after it hits $\check{\rho}$. Finally, for a vertex $\check{\sigma}=\{m\}\subset\Delta$, we can regard $m$ as a tangent vector on $M_{\bb{R}}$.

\begin{lemma}\label{lem:boundary_wall}
    Let $x(t)$ be an $\alpha\beta$-LT line so that $x(a^+)$ exists on $\partial\Delta$. Suppose $x(a^+)\in\check{\tau}$ for some $\tau\in\Sigma$. Then $a^+<\infty$ and
    $$m(\sigma^{(\alpha)})-m(\sigma^{(\beta)})\in\tau^{\vee}\cap M,$$
    for all maximal cone $\sigma$ containing $\tau$.
\end{lemma}
\begin{proof}
    Since $x(a^+)$ exists on $\partial\Delta\cap\check{\tau}$, $\dim(\tau)\geq 1$. Let $\rho\in\Sigma(1)$ be any ray contains in $\tau$. By assumption, the limit
    $$\lim_{t\to a^+}\inner{x(t),v_{\rho}}$$
    exists. If $a^+=\infty$, then
    $$\lim_{t\to a^+}\inner{\dot{x}(t),v_{\rho}}=0.$$
    By \eqref{eqn:LT_grad_flow} and $\Lambda_{\bb{L}^{\trop}}$-admissibility,
    $$\lim_{t\to a^+}\dot{x}(t)\in m(\sigma^{(\alpha)})-m(\sigma^{(\beta)})+\rho^{\perp},$$
    for any $\sigma\supset\rho$. This implies $(m(\sigma^{(\alpha)})-m(\sigma^{(\beta)}))|_{\rho}=0$, violating separability of $\bb{L}^{\trop}$. By elementary analysis, the left-handed derivative $\dot{x}(a^+)$ exists and $\dot{x}(t)\to\dot{x}(a^+)$ as $t\to a^+$.
    
    It remains to prove the desired slope condition. By definition, we have
    $$\inner{x(t),v_{\rho}}\leq\phi^{\trop}(v_{\rho}),$$
    for all $\rho\in\Sigma(1)$. If $x(a^+)\in\check{\tau}$, for some $\tau\in\Sigma$, then
    $$\inner{x(a^+),v_{\rho}}=\phi^{\trop}(v_{\rho}),$$
    for all rays $\rho\subset\tau$. Hence for any $t<a^+$,
    $$\inner{\frac{x(t)-x(a^+)}{t-a^+},v_{\rho}}=\frac{1}{t-a^+}\left(\inner{x(t),v_{\rho}}-\phi^{\trop}(v_{\rho})\right)\geq 0.$$
    for all $\rho\subset\tau$. This implies $\inner{\dot{x}(a^+),v_{\rho}}\geq 0$, for all $\rho\subset\tau$ and so $\dot{x}(a^+)\in\tau^{\vee}$. By \eqref{eqn:LT_grad_flow} and $\Lambda_{\bb{L}^{\trop}}$-admissibility again, we have
    $$\dot{x}(a^+)=\lim_{t\to a^+}\dot{x}(t)\in m(\sigma^{(\alpha)})-m(\sigma^{(\beta)})+\tau^{\perp},$$
    for any maximal cone $\sigma$ containing $\tau$. The result follows.
\end{proof}

We now specify a local model around each ramification point of a Lagrangian multi-section. We equip $\bb{C}^2\cong\bb{R}^2\oplus\sqrt{-1}\bb{R}^2\cong T^*\bb{R}^2$ with the standard symplectic form and denote by $p_2:\bb{C}^2\to\bb{C}$ the projection onto the second coordinate. The subset
$$L_2:=\{(z_1,z_2)\in\bb{C}^2:\ol{z}_1=z_2^2\}$$
is a 2-fold Lagrangian multi-section with respect to $p_2:\bb{C}^2\to\bb{C}$. We now make the following

\begin{definition}
    A Lagrangian multi-section $\bb{L}$ in $T^*M_{\bb{R}}$ is said to be have \emph{simple branching} if for each branch point $\xi_0\in S$, there exists a neighbourhood $U$ of $\xi_0$ such that
    $$p_L^{-1}(U)=U'\sqcup\bigsqcup_{\alpha=1}^{r-2}U^{(\alpha)},$$
    for some open sets $U',U^{(\alpha)}\subset L$ so that $p_L|_{U^{(\alpha)}}:U^{(\alpha)}\to U$ are diffeomorphisms and there exists an embedding $f:U\to\bb{C}$ mapping $\xi_0$ to $0\in\bb{C}$, a symplectic embedding $F:p_{N_{\bb{R}}}^{-1}(U)\to\bb{C}^2$ such that the diagram
    \begin{center}
    \begin{tikzcd}
    & p_{N_{\bb{R}}}^{-1}(U) \rar{F} \arrow{d}{p_{N_{\bb{R}}}} & \bb{C}^2 \arrow{d}{p_2} \\
    & U \rar{f} & \bb{C}
    \end{tikzcd}
    \end{center}
    commutes, and $(F\circ i)(U')=L_2\cap p_2^{-1}(f(U))$.
\end{definition}
By choosing a branch cut around a branch point, we can write the local model $L_2$ as
$$\left\{\left(d\left(\frac{2}{3}r^{\frac{3}{2}}\cos\left(\frac{3}{2}\theta\right)\right),re^{\sqrt{-1}\theta}\right)\in\bb{C}^2:r>0,\theta\in(-\varepsilon,2\pi-\varepsilon)\right\}$$
in terms of polar coordinate $(r,\theta)$ on $\bb{C}$. With respect to the flat metric on $\bb{C}$, it is a standard calculation \cite{Fukaya_asymptotic_analysis, Suen_TP2} that there are three solutions to the gradient flow
$$(\dot{r},\dot{\theta})=\grad\left(\frac{4}{3}r^{\frac{3}{2}}\cos\left(\frac{3}{2}\theta\right)\right)$$
that limit to the branch point $0\in\bb{C}$.
By our choice of branch cut, one can check that there are one incoming and two outgoing gradient lines and the branch cut lies between the two outgoing lines. This is exactly the local model around a branch point as shown in Figure \ref{fig:branched_point}.

Recall that our gradient flow depends on the Hessian metric $g_{\phi}$. However, the above flow line calculation is based on the flat metric. We therefore would like to modify the Hessian metric $g_{\phi}$ around branch points of $p_L:L\to N_{\bb{R}}$ so that this new metric is flat around the branch points and agrees with $g_{\phi}$ outside a large compact subset of $N_{\bb{R}}$. Such a metric can easily be constructed by the partition of unity argument. Denote such a modified metric by $\ol{g}_{\phi}$.

\begin{definition}
    The \emph{set of rays subordinate to $\bb{L}$} is the collection
    $$\cu{R}_{\bb{L}}:=\{\mf{d}:[0,a^+)\to N_{\bb{R}}\}$$
    of all embedded rays so that the following conditions are satisfied.
    \begin{enumerate}
        \item $\mf{d}|_{\mf{d}^{-1}(U)}$ is a maximal solution to the $\alpha\beta$-gradient flow equation
        $$\dot{\mf{d}}(t)=\grad_{\ol{g}_{\phi}}(\varphi^{(\alpha)}-\varphi^{(\beta)})(\mf{d}(t))$$
        for some distinct $\alpha,\beta=1,\dots,r$.
        \item If $\mf{d}:[0,a^+)\to N_{\bb{R}}$ satisfies some $\alpha\beta$-gradient gradient flow equation with $\mf{d}(0)\in S$, then $\mf{d}\in\cu{R}_{\bb{L}}$.
        \item If $\mf{d}_1,\mf{d}_2\in\cu{R}_{\bb{L}}$ satisfy the $\alpha\beta$- and $\beta\gamma$-gradient flow equation with $\mf{d}_1(t_1)=\mf{d}_2(t_2)$ for some $t_1\in\mf{d}_1|_{\mf{d}_1^{-1}(U)},t_2\in\mf{d}_2|_{\mf{d}_2^{-1}(U)}$, then there exists $\mf{d}\in\cu{R}_{\bb{L}}$ such that $\mf{d}$ satisfies the $\alpha\gamma$-gradient flow equation and $\mf{d}(0)=\mf{d}_1(t_1)=\mf{d}_2(t_2)$.
    \end{enumerate}
\end{definition}

\begin{assumption}
    We always assume $\bb{L}$ is generic enough so that all rays in $\cu{R}_{\bb{L}}$ intersect transversely.
\end{assumption}

We can apply Legendre transform to rays in $\cu{R}_{\bb{L}}$ and get a collection of LT lines on $\Delta$. They give a collection of walls $\cu{W}_{\bb{L}}$ in the most obvious way. Let
$$\mf{s}_{\lambda}:=\left\{x\in\partial\Delta_{\lambda}:\exists\mf{d}\in\cu{R}_{\bb{L}}\text{ such that }d\phi(\mf{d}(t))=x,\text{ for some }t\in(0,a^+)\right\}.$$
For $x\in\mf{s}_{\lambda}$, we declare $\alpha<_x\beta$ if and only if
\begin{equation}\label{eqn:slope_condition}
    m(\sigma^{(\alpha)})-m(\sigma^{(\beta)})\in\tau^{\vee}\cap M,
\end{equation}
for any maximal cone $\sigma\in\Sigma(2)$ contains $\rho$. By Lemma \ref{lem:boundary_wall}, $<_x$ is a well-defined partial ordering.

Let $\check{p}_L:L\to\ring{\Delta}$ be the composition $d\phi\circ p_L$, which is of course still a branched covering map of degree $r$. However, this map cannot be extended to a branched covering map over $\Delta$. To include the information along the boundary, we choose a $\lambda\in(0,1)$ and consider the shrunken polytope $\Delta_{\lambda}:=\lambda\Delta$. Assume $\lambda$ is close enough to 1 that $\check{p}_L^{-1}(\Delta_{\lambda})$ is isotopy to $L$. The restriction $\check{p}_L|_{\check{p}_L^{-1}(\Delta_{\lambda})}:\check{p}_L^{-1}(\Delta_{\lambda})\to\Delta_{\lambda}$ is then a branched covering map of degree $r$. We denote this covering map by $\check{p}_{L,\lambda}$.

\begin{definition}\label{def:spectral_network_L}
    If there exists $\lambda_0\in(0,1)$ so that $(\cu{W}_{\bb{L}},\mf{s}_{\lambda})$ is a spectral network subordinate to $\check{p}_{L,\lambda}:\check{p}_L^{-1}(\Delta_{\lambda})\to\Delta_{\lambda}$ for all $\lambda\in[\lambda_0,1)$, we call it the \emph{spectral network subordinate to $\bb{L}$}. As $\mf{s}_{\lambda}$ is determined by $\cu{W}_{\bb{L}}$, we simply say $\cu{W}_{\bb{L}}$ is a spectral network.
\end{definition}

\begin{remark}
    Such spectral network is necessarily finite because $\Delta$ has no punctures.
\end{remark}

     Because of the local models in Figure \ref{fig:branched_point}, \ref{fig:joint_1}, \ref{fig:joint_2} and Lemma \ref{lem:boundary_wall}, such spectral network may not exist for general $\Lambda_{\bb{L}^{\trop}}$-admissible $\bb{L}$. See Example \ref{eg:non_example}. We now provide a condition for which $\cu{W}_{\bb{L}}$ is a spectral network.

\begin{definition}\label{def:well_behaved}
We say $\bb{L}$ is \emph{well-behaved} if $\cu{R}_{\bb{L}}$ is finite and for any $\mf{d}\in\cu{R}_{\bb{L}}$ the limit
$$\lim_{t\to a^+}d\phi(\mf{d}(t))$$
exists on $\partial\Delta$.
\end{definition}

\begin{remark}\label{rmk:finite}
    In a conversation with Yoo Jae Nho, he demonstrated to the author that how exactness of $\bb{L}$ and boundedness of the primitive ensure the finiteness of the spectral network. In our case, an exact $\Lambda_{\bb{L}^{\trop}}$-admissible Lagrangian multi-section takes the form
    $$\bb{L}=\{((p^*)^{-1}(d\varphi(l))),-p(l))\in M_{\bb{R}}\times N_{\bb{R}}:l\in L\}$$
    for some smooth function $\varphi:L\to\bb{R}$ so that $\varphi$ is a smoothing of $\varphi^{\trop}$ near infinity. See \cite[Section 5.2]{OS}. The $\lambda_{M_{\bb{R}}}$-primitive of $\bb{L}$ is given by $i^*f+\varphi$, where $f:T^*M_{\bb{R}}\to\bb{R}$ is the function $f(x,\xi):=\inner{x,\xi}$. As $i^*f$ and $-\varphi$ satisfy the same asymptotic condition determined by $\bb{L}^{\infty}\subset\Lambda_{\bb{L}^{\trop}}^{\infty}$, we can choose a good enough smoothing $\varphi$ of $\varphi^{\trop}$ so that $i^*f+\varphi$ is bounded on $L$ and Nho's argument applied. However, as Nho's work is still in preparation, to respect his original idea, we include finiteness here as an assumption.
\end{remark}

Well-behavedness prohibits gradient lines from connecting two branched points and spinning around. Hence well-behavedness is in fact a generic assumption on $\bb{L}$.

\begin{theorem}\label{thm:existence}
    If $\bb{L}$ is a well-behaved $\Lambda_{\bb{L}^{\trop}}$-admissible Lagrangian multi-section, then $\cu{W}_{\bb{L}}$ is a non-degenerated spectral network.
\end{theorem}
\begin{proof}
All the properties of being a spectral network actually follows from the definition of $\cu{R}_{\bb{L}}$. We first check Condition \eqref{con:local_model} in Definition \ref{def:spectral_network_L}. In Section \ref{sec:spec_net_L}, we have already seen that the local model of $\bb{L}$ around ramification gives rise to three gradient lines as shown in Figure \ref{fig:branched_point}. By well-behavedness of $\bb{L}$, there are no gradient lines connecting two branch points. This implies the label of these three gradient lines around a branch point is exactly given by Figure \ref{fig:branched_point}. Suppose two gradient lines $\xi^{(\alpha\beta)},\xi^{(\beta\gamma)}$ with $\alpha\neq\gamma$, intersecting transversely at a point $\xi_0:=\xi^{(\alpha\beta)}(t_1)=\xi^{(\beta\gamma)}(t_2)\in U$, for some time $t_1,t_2$. Then the gradient line $\xi^{(\alpha\gamma)}$ of
$$\dot{\xi}(t)=\grad_{\ol{g}_{\phi}}(\varphi^{(\alpha)}-\varphi^{(\gamma)})(\xi(t))$$
hat pass through $\xi_0$ has tangent vector $$\dot{\xi}^{(\alpha\beta)}(t_1)+\dot{\xi}^{(\beta\gamma)}(t_2)$$
at $\xi_0$. This gives the bottom local models in Figure \ref{fig:joint_1} or Figure \ref{fig:joint_2}. When a gradient line hits a branch cut, the label will change according to Condition \eqref{con:branch_change}, simply by topological reason.

Now, take the Legendre transform to obtain a collection of walls on $\Delta$. By well-behavedness, LT-lines can only terminate on $\partial\Delta$, so it cannot connect two branched points. Hence $\cu{W}_{\bb{L}}$ must be non-degenerated.
\end{proof}

We would like to point out that a generic embedded 2-fold Lagrangian multi-section or an immersed one with index 1 immersed double points is well-behaved, that is, the function $\varphi^{(1)}-\varphi^{(2)}$ has at most index 1 critical point in $U$. First, $\cu{R}_{\bb{L}}$ is finite (even without exactness) as we just have two labels, namely 12 or 21. Note that critical points of $\varphi^{(\alpha)}-\varphi^{(\beta)}$ on $U$ corresponds to immersed points of $\bb{L}$. As $\bb{L}$ is assumed to have index 1 immersed double points, critical points of $\varphi^{(\alpha)}-\varphi^{(\beta)}$ on $U$ are isolated and have index 1. This implies the Hessian of $\varphi^{(1)}-\varphi^{(2)}$ at critical points is non-degenerated and has only one negative eigenvalue, meaning that they are all saddle points and elementary calculus tells us that at such critical points, a wall can keep propagating after hitting the critical point. Geometrically, this corresponds to a collision of two $(12)$-type branch points and leads the local model in Figure \ref{fig:joint_1} (with $\gamma\delta=\alpha\beta$). Floer theoretically, this corresponds to the fact that a 2-dimensional non-compact immersed Lagrangian whose immersed sector is concentrated at degree 1 is (tautologically) unobstructed. Moreover, as the local models of these Lagrangian multi-sections are given by hyper-K\"ahler rotation of hyper-elliptic curves, they admit non-degenerated spectral networks if the hyper-elliptic curve is generic\footnote{The hyper-elliptic curve in \cite[Section 5.2]{OS} is chosen to be real. But we can relax the construction by multiplying the polynomial $f_d$ by a generic $e^{\sqrt{-1}\theta}$ and choosing $c_{\frac{d}{2}+1}=0$ when $d$ is even. The resulting spectral network will then be non-degenerated.}. This provides many examples of 2-fold Lagrangian multi-sections that admit non-degenerated spectral networks.

\begin{remark}
    The collection $\cu{R}_{\bb{L}}$ gives a collection of gradient trees, which we think of as Maslov zero holomorphic disks bounded by $\bb{L}$ and fibers of $p_{N_{\bb{R}}}:T^*M_{\bb{R}}\to N_{\bb{R}}$. See \cite{FO, Fukaya_asymptotic_analysis, Abouzaid09} for more details. Therefore, we expect the existence of such a spectral network should be related to the Floer-theoretic obstruction of $\bb{L}$.
\end{remark}

We now introduce solitons. They are the key ingredients in constructing the non-abelianization map.

\begin{definition}\label{def:soliton}
    A \emph{soliton} associated to a wall $w\in\cu{W}$ with label $\alpha\beta$ is pair $(s_{\alpha\beta},x)$, where $s_{\alpha\beta}:[0,1]\to L$ a smooth path and $x\in\Int(w)$, satisfying the following conditions.
    \begin{enumerate}
        \item $s_{\alpha\beta}(0)=x^{(\alpha)}\in U^{(\alpha)},s_{\alpha\beta}(1)=x^{(\beta)}\in U^{(\beta)}$ with $\check{p}_L(x^{(\alpha)})=\check{p}_L(x^{(\beta)})=x$.
        \item For each joint $\mf{j}$, there exists a small ball $B_{\varepsilon}(\mf{j})\subset C$ centred at $\mf{j}$ such that $$(\check{p}_L\circ s_{\alpha\beta})([0,1])-\bigcup_{\mf{j}}B_{\varepsilon}(\mf{j})\subset|\cu{W}|-\bigcup_{\mf{j}}B_{\varepsilon}(\mf{j}).$$
        \item The tangent vector $p_*(\dot{s}_{\alpha\beta}(0))$ is opposite to the orientation of $w$ while $p_*(\dot{s}_{\alpha\beta}(1))$ agrees.
    \end{enumerate}
    Two solitons $(s_{\alpha\beta},x),(s_{\alpha\beta}',x')$ are said to be \emph{equivalent} if they are homotopic in $L$. Denote by $\cu{S}(w,\alpha\beta)$ the set of solitons associated to a wall $w$ with label $\alpha\beta$ modulo equivalence.
\end{definition}

Solitons are required to satisfy some ``traffic rules". Roughly speaking, these traffic rules ensure the so-called \emph{soliton contents}, which is a collection $\{(s_{\alpha\beta},x)\}$ of solitons associated to each wall, is ``locally constant" in $x$. It also tells us how solitons are added when scattering occurs as in Figure \ref{fig:joint_2}. We refer to \cite[Section 9.3, \textbf{ST1-ST3}]{Spectral_networks} for details. In our case, solitons are lift of gradient flow lines on $N_{\bb{R}}$. When $\cu{W}_{\bb{L}}$ is a spectral network, the set of solitons over a wall is always locally finite and the way to determine the new solitons from the scattered wall is given in \cite[\textbf{ST3}]{Spectral_networks}.

%Given any path $\gamma:[0,1]\to L$, we define its \emph{symplectic energy} to be
%$$E(\gamma):=\int_0^1\gamma^*i^*\lambda,$$
%where $\lambda=\sum_i\xi_idx_i$ is the Liouville 1- form on $T^*M_{\bb{R}}$. By the Lagrangian condition, it is obvious that $E(\gamma)$ on the homotopy class of $\gamma$ relative to the boundaries $\gamma(0),\gamma(1)$. In particular, this holds for any soliton $(s_{\alpha\beta},x)$.

\begin{remark}\label{rmk:grad_trees}
    As we think of gradient trees as the tropical limit of Maslov index zero disks, we should also think of solitons as (part of) the boundary of Maslov index zero $J_{\ol{g}_{\phi}}$-holomorphic disks bounded by $\bb{L}$ and a fiber. See \cite{Fukaya_asymptotic_analysis} for the origin of this interpretation. 
\end{remark}

\subsection{Non-abelianization}\label{sec:non_abel}

Let $C$ be an orientable surface with or without boundary/punctures and $p:L\to C$ be an $r$-fold covering map with branch locus $S\subset C-\partial C$. Suppose there exists a non-degenerated spectral network $\cu{W}$ subordinate to $p:L\to C$. We recall the construction of the non-abelianization of a rank 1 twisted-flat $\Bbbk^{\times}$-local system.

Let $X$ be a smooth orientable surface and $\til{X}:=TX-0_X$ be the tangent bundle of $X$ with the zero section $0_X$ removed. We denote by $\pi_X:\til{X}\to X$ the projection map. For any smooth path $\gamma:I\to X$ with non-vanishing speed, there exists a canonical lift $\til{\gamma}:I\to\til{X}$ given by
$$\til{\gamma}(t):=(\gamma(t),\dot{\gamma}(t)).$$
Hence the spectral network $\cu{W}$ can be lifted to $\til{C}$ naturally. We denote this lifting by $\til{\cu{W}}$.

\begin{definition}
A rank $r$ local system on $\til{X}$ is said to be \emph{twisted-flat} if it has holonomy $-\Id$ around the fiber class $S_X\in H_1(\til{X};\bb{Z})$. Denote the moduli space of twisted-flat rank $r$ local systems on $\til{X}$ by $\cu{M}^{\tw}(\til{X},GL_r)$. 
\end{definition}

By choosing a spin structure on $X$, there is a well-known correspondence
$$\cu{M}^{\tw}(\til{X},GL_r)\xrightarrow{\sim}\cu{M}(X,GL_r),$$
where the latter is the moduli of rank $r$ local systems on $X$. The spin determines a fiberwise double cover $\pi:\til{X}'\to\til{X}$. Given $\cu{E}^{\tw}\in\cu{M}^{\tw}(\til{X},GL_r)$, the pull-back $\pi^*\cu{E}^{\tw}$ has holonomy $\Id$ along the fiber class $S_X'\in H_1(\til{X};\bb{Z})$ and hence descends to a local system on $X$. The other direction is given by mapping $\cu{E}\mapsto\pi^*\cu{E}\otimes\cu{O}^{\tw}$, where $\cu{O}^{\tw}\in\cu{M}^{\tw}(\til{X},GL_1)$ is the rank 1 twisted-flat local system that has trivial monodromy on $H_1(X;\bb{Z})\subset H_1(\til{X};\bb{Z})$ (via $\gamma\mapsto\til{\gamma}$).

Let $\cu{L}^{\tw}\in\cu{M}^{\tw}(\til{L},GL_1)$. We now construct the non-abelianization $\Psi_{\cu{W}}(\cu{L}^{\tw})$. Choose a set of branch cuts $\{c_x\}_{x\in S}$. Put
$$B:=C-|\cu{W}|-\bigsqcup_{x\in S}c_x.$$
For $x\in B$ and any lift $\til{x}\in\til{U}$, we define
$$E_{\til{x}}:=\bigoplus_{\alpha=1}^r\cu{L}^{\tw}_{\til{x}^{(\alpha)}}.$$

We have three cases to consider.
\begin{enumerate}
    \item [(I)] Suppose two points $x_1,x_2$ lie in the same connected component of $B$. Let $\gamma\subset B$ be a smooth embedded path from $x_1$ to $x_2$ inside the connected component. We then identify $E_{\til{x}_1}$ and $E_{\til{x}_2}$ by
    $$\ol{P}_{\til{x}_1\to\til{x}_2}:=\left(\ol{P}_{\til{x}_1^{(\alpha)}\to\til{x}_2^{(\alpha)}}\right)\in\Hom(E_{\til{x}_1},E_{\til{x}_2}),$$
    where $\ol{P}_{\til{x}_1^{(\alpha)}\to\til{x}_2^{(\alpha)}}\in\Hom(\cu{L}_{\til{x}_1^{(\alpha)}},\cu{L}_{\til{x}_2^{(\alpha)}})$ is the parallel transport of $p_*\cu{L}^{\tw}$ along the lift $\til{\gamma}^{(\alpha)}$.
    \item [(II)] Suppose two points $x_1,x_2\in B$ are separated by a branch cut so that the $\alpha$-sheet changes to the $\alpha'$-sheet. Let $\gamma\subset C$ be a smooth embedded path that crosses the branch cut once from $x_1$ to $x_2$. It determines the lifts $\til{x}_1,\til{x}_2\in\til{U}$. We identify $E_{\til{x}_1}$ and $E_{\til{x}_2}$ by
    $$\ol{P}_{\til{x}_1\to\til{x}_2}:=\left(\ol{P}_{\til{x}_1^{(\alpha)}\to\til{x}_2^{(\alpha')}}\right)\in\Hom(E_{\til{x}_1},E_{\til{x}_2}),$$
    where $\gamma^{(\alpha\alpha')}$ is the lift of $\gamma$ so that $\gamma^{(\alpha\alpha')}(0)=x^{(\alpha)},\gamma^{(\alpha\alpha')}(1)=x^{(\alpha')}$ and $\ol{P}_{\til{x}_1^{(\alpha)}\to\til{x}_2^{(\alpha')}}$ along $\til{\gamma}^{(\alpha\alpha')}$.
    
    \item [(III)] Let $x_1,x_2\in V$ be separated by a wall $w$ with label $\alpha\beta$ and $\gamma\subset C$ be a smooth embedded path going from $x_1$ to $x_2$, intersecting $\Int(w)$ transversely at a point $x$. The points $x,x_1,x_2$ have lifts $\til{x},\til{x}_1,\til{x}_2\in\til{U}$ determined by the path $\gamma$ and its tangent direction. Denote by $\ol{P}_{\til{x}_1\to\til{x}},\ol{P}_{\til{x}\to\til{x}_2}$ the parallel transports of $\cu{L}^{\tw}$ induced by the two obvious segments of $\til{\gamma}$. Suppose $(s_{\alpha\beta},x)\in\cu{S}(w,\alpha\beta)$ is a soliton associated to $w$. We can lift $(s_{\alpha\beta},x)$ canonically to a path $\til{s}_{\alpha\beta}$ in $\til{L}$, going from $\til{x}_s^{(\alpha)}:=(x^{(\alpha)},\dot{s}_{\alpha\beta}(0))\in\til{U}^{(\alpha)}$ to $\til{x}_s^{(\beta)}:=(x^{(\beta)},\dot{s}_{\alpha\beta}(1))\in\til{U}^{(\beta)}$. However, $\til{x}_s^{(\alpha)},\til{x}_s^{(\beta)}$ may not be the same as the lifts $\til{x}^{(\alpha)}\in\til{U}^{(\alpha)},\til{x}^{(\beta)}\in\til{U}^{(\beta)}$. We choose two paths $\til{A}_{\pm}\subset\pi_C^{-1}(x)$ with $\til{A}_+$ going from $(x,\dot{w}_x)$ to $\til{x}$ and $\til{A}_-$ going from $\til{x}$ to $(x,-\dot{w}_x)$ so that their concatenation $\til{A}_-*\til{A}_+$ is homotopic (with boundary points fixed) to a path that contains $\til{x}$ with zero winding number around the origin. We then lift them canonically to paths $\til{A}_{\pm}^{(\alpha)}\subset\pi_L^{-1}(x^{(\alpha)}),\til{A}_{\pm}^{(\beta)}\subset\pi_L^{-1}(x^{(\beta)})$. Put
    $$\til{a}(s_{\alpha\beta}):=(-\til{A}_+^{(\beta)})*\til{s}_{\alpha\beta}*\til{A}_-^{(\alpha)},$$
    which is a path from $\til{x}^{(\alpha)}$ to $\til{x}^{(\beta)}$. Let
    $\ol{P}(\til{a}(s_{\alpha\beta}),\til{x})$ be the parallel transport of $\cu{L}^{\tw}$ along $\til{a}(s_{\alpha\beta})$ and define the \emph{wall-crossing automorphism associated to $w$}
    $$\ol{\Theta}_{w,\til{x}_1\to\til{x}_2}:=\ol{P}_{\til{x}\to\til{x}_2}\left(\Id_{\til{x}}+\sum_{(s_{\alpha\beta},x)\in\cu{S}(w,\alpha\beta)}\ol{P}(\til{a}(s_{\alpha\beta}),\til{x})\right)\ol{P}_{\til{x}_1\to\til{x}}\in\Hom(E_{\til{x}_1},E_{\til{x}_2}).$$
\end{enumerate}

Let $\gamma:[0,1]\to C-S-\mathrm{Joints}(\cu{W})$ be a generic path that intersects $|\cu{W}|$ transversely. By local finiteness of $\cu{W}$, it intersects finite may walls. Let $\ol{\Theta}_{\cu{W},\gamma}$ be the path-ordered product induced by parallel transport and $\ol{\Theta}_w$'s along $\gamma$.

\begin{theorem}[\cite{Spectral_networks}]\label{thm:nonabel}
    For any contractible loop $\gamma:[0,1]\to C$ that avoids $S\cup\mathrm{Joints}(\cu{W})$, we have $\ol{\Theta}_{\cu{W},\gamma}=\Id_{E_{\til{\gamma}(0)}}$.
\end{theorem}

Theorem \ref{thm:nonabel} gives a twisted-flat rank $r$ local system $\ol{\Psi}_{\cu{W}}(\cu{L}^{\tw})$ on $\til{C}$, which we call the \emph{non-abelianization of $\cu{L}^{\tw}$ with respect to $\cu{W}$}. Via the identifications $\cu{M}^{\tw}(\til{L},GL_1)\cong\cu{M}(\til{L},GL_1)$ and $\cu{M}^{\tw}(\til{C},GL_r)\cong\cu{M}(\til{C},GL_r)$, we also say $\ol{\Psi}_{\cu{W}}(\cu{L})$ is the \emph{non-abelianization of $\cu{L}$ with respect to $\cu{W}$}. As pointed out in \cite[Section 10.2]{Spectral_networks}, the non-abelianization $\ol{\Psi}_{\cu{W}}(\cu{L}))$ is more than just a local system on $C$. It carries the flag data given by the partial ordering $<_x$ at each point $x\in\mf{s}$, namely, there is a flag of subspaces in the fibers $\ol{\Psi}_{\cu{W}}(\cu{L})_x$, $x\in\mf{s}$, for which they are invariant under the flat connection of $\ol{\Psi}_{\cu{W}}(\cu{L}))$.

%So far, we haven't seen the effect of the Novikov parameter. This is because we have assumed $\cu{W}$ to be locally finite. Suppose we are given a general spectral network $\cu{W}$. In this case, a path can intersect infinitely many walls, so path-ordered products are not defined. Here is where the Novikov parameter $T$ comes to the rescue. Let $K\subset C-S-\partial C$ be a compact subset and define
%$$W_K:=K\cap|\cu{W}|.$$

%\begin{lemma}\label{lem:finite}
 %   $W_K$ intersects finitely many walls of $\cu{W}$.
%\end{lemma}

%By Lemma \ref{lem:finite}, there is a compact submanifold with boundary $C_{\lambda}\subset C$ and a spectral network $\cu{W}^{\leq\lambda}$ subordinate to the covering map $p|_{p^{-1}(C_{\lambda})}:p^{-1}(C_{\lambda})\to C_{\lambda}$ whose support is given by $W_{\lambda}$ and we call $\cu{W}^{\leq\lambda}$ the \emph{$\lambda$-truncation of $\cu{W}$}. We are now safe to apply non-abelianization to $\cu{L}^{\tw}$ to produce a twisted-flat rank $r$ local system on $\til{C}_{\lambda}$ defined over $\bb{K}/(T^{\lambda})$. By letting $\lambda\to\infty$, we get a twisted-flat rank $r$ local system $\ol{\Psi}_{\cu{W}}(\cu{L}^{\tw})$ on $\til{C}$, defined over $\bb{K}$. We refer $\ol{\Psi}_{\cu{W}}(\cu{L}^{\tw})$ (resp. $\ol{\Psi}_{\cu{W}}(\cu{L})$) as the non-abelianization of $\cu{L}^{\tw}$ (resp. $\cu{L}$) with respect to the spectral network $\cu{W}$.

\subsection{Main construction}\label{sec:mirror_construction}

Let $\bb{L}^{\trop}$ be a tropical Lagrangian multi-section over $\Sigma$. Let $\cu{W}_{\bb{L}^{\trop}}$ be a non-degenerated spectral subordinated to some covering map $p:L\to\Delta$ (not necessarily admit a Lagrangian immersion $L\to T^*M_{\bb{R}}$) over with flag data given by \eqref{eqn:slope_condition}. Let $\cu{L}^{\tw}$ be a rank 1 twisted-flat $\Bbbk^{\times}$-local system on the $S^1$-bundle $\til{L}$. We now use the non-abelianization of $\cu{L}^{\tw}$ with respect to $\cu{W}$ to construct a rank $r$ toric vector bundle on $X_{\Sigma}$.

Our idea is to construct a 1-cocycle via path-ordered product along the boundary $\partial\Delta_{\lambda}$ of the $\lambda$-shrunken polytope $\Delta_{\lambda}$. By the finiteness of $\cu{W}_{\bb{L}^{\trop}}$ on the boundary $\partial\Delta$, we can choose $\lambda\sim 1$ such that
\begin{itemize}
    \item $\Delta-\ring{\Delta}_{\lambda}$ contains no joints.
    \item If a wall $w$ intersects $\Int(\lambda\check{\tau})\subset\partial\Delta_{\lambda}$, for some $\tau\in\Sigma$, we continue to have
    $$m(\sigma^{(\alpha)})-m(\sigma^{(\beta)})\in\tau^{\vee}\cap M,$$
    for all maximal cone $\sigma\supset\tau$.
\end{itemize}

\begin{remark}
    For instance, if $w$ is a wall with $w(1)\in\check{\sigma}$, shrinking $\Delta$ by $\lambda$ may make $w$ intersects the relative interior of one of the adjacent edge $\check{\rho}\supset\check{\sigma}$. Nevertheless, we have
    $$m(\sigma^{(\alpha)})-m(\sigma^{(\beta)})\in\sigma^{\vee}\cap M\subset\rho^{\vee}\cap M.$$
    Hence holomorphicity is still preserved.
\end{remark}

%By the finiteness of $\cu{W}_{\bb{L}}$ near the boundary $\partial\Delta$ (Condition \eqref{con:finite_walls}), there exists a compact subset $K\subset\ring{\Delta}$ that contains all the branched points of $\check{p}_L$ and joints of $\cu{W}_{\bb{L}}$.
%For each vertex $\check{\sigma}\subset\Delta$, let $V_{\sigma}\subset\Delta$ be a small neighbourhood of $\check{\sigma}$ so that $V_{\sigma}$ is disjoint from 

%$$|\cu{W}_{\bb{L}}|\cup\ol{|d\phi(\Sigma(1))|\cup\bigsqcup_{\xi\in S}d\phi(c_{\xi})}.$$
On each edge $\check{\rho}\subset\partial\Delta$, fix a point $x_{\rho}\in\Int(\check{\rho})-|\cu{W}_{\bb{L}^{\trop}}|$. This point tells us when we should change from the frame on $U(\sigma_1)$ to $U(\sigma_2)$, where $\sigma_1\cap\sigma_2=\rho$. See Case (III) below. We also make the following notation conventions.

\begin{notation}\label{conv:edge}
    Let $x\in\partial\Delta-\bigcup_{\rho\in\Sigma(1)}\{x_{\rho}\}$.
    \begin{enumerate}
        \item If $x$ is a vertex of $\Delta$, denote by $\sigma_x$ the corresponding maximal cone.
        \item If $x\in\Int(\check{\rho}_x)$ for some $\rho_x\in\Sigma(1)$, denote by $\sigma_x$ the maximal cone for which $\{x\},\check{\sigma}_x$ lies in the same component of $\check{\rho}_x-\{x_{\rho_x}\}$.
    \end{enumerate}
    When $x=w(1)$, for some boundary $w$, we write $\sigma_x,\rho_x$ as $\sigma_w,\rho_w$.
\end{notation}

As $\cu{W}_{\bb{L}^{\trop}}$ is non-degenerated, all LT lines (after reversing the label if necessary) end on $\partial\Delta$. We define
$$B_{\lambda}:=\partial\Delta_{\lambda}-|\cu{W}_{\bb{L}^{\trop}}|-\bigsqcup_{\rho\in\Sigma(1)}\{\lambda x_{\rho}\}-\bigsqcup_{\xi\in S}d\phi(c_{\xi}).$$
For each point $\til{x}\in\til{B}_{\lambda}$, we put
$$E_{\til{x}}:=\bigoplus_{\alpha=1}^r\cu{L}_{\til{x}^{(\alpha)}}^{\tw}$$
as before. This time we have four cases to consider.

\begin{enumerate}
    \item [(I)] Suppose two points $x_1,x_2$ lie in the same connected component of $B_{\lambda}$. Then $\sigma_{x_1}=\sigma_{x_2}=:\sigma$. Let $\gamma\subset B_{\lambda}$ be an embedded path from $x_1$ to $x_2$. We then identify $E_{\til{x}_1}\otimes_{\Bbbk}\Bbbk[\sigma^{\vee}\cap M]$ and $E_{\til{x}_2}\otimes_{\Bbbk}\Bbbk[\sigma^{\vee}\cap M]$ by the parallel transport $$P_{\til{x}_1\to\til{x}_2}:=\left(\ol{P}_{\til{x}_1^{(\alpha)}\to\til{x}_2^{(\alpha)}}\otimes 1\right)\in\Hom(E_{\til{x}_1},E_{\til{x}_2})\otimes_{\Bbbk}\Bbbk[\sigma^{\vee}\cap M].$$
    \item [(II)] Suppose two points $x_1,x_2\in B_{\lambda}$ are separated by a branch cut of type $(\alpha\beta)$. We continue to have $\sigma_{x_1}=\sigma_{x_2}=:\sigma$. Let $\gamma\subset \partial\Delta_{\lambda}$ be an embedded path from $x_1$ to $x_2$, passing through the branch cut once. We then identify $E_{\til{x}_1}\otimes_{\Bbbk}\Bbbk[\sigma^{\vee}\cap M]$ and $E_{\til{x}_2}\otimes_{\Bbbk}\Bbbk[\sigma^{\vee}\cap M]$ by the parallel transport $$P_{\til{x}_1\to\til{x}_2}:=\left(\ol{P}_{\til{x}_1^{(\alpha)}\to\til{x}_2^{(\alpha')}}\otimes 1\right)\in\Hom(E_{\til{x}_1},E_{\til{x}_2})\otimes_{\Bbbk}\Bbbk[\sigma^{\vee}\cap M].$$
    \item [(III)] Suppose $x_1,x_2\in B_{\lambda}$ are separated by the point $\lambda x_{\rho}$ for some $\rho\in\Sigma(1)$. Then $\sigma_{x_1}\cap\sigma_{x_2}=\rho$. Let $\gamma\subset \partial\Delta_{\lambda}$ be an embedded path from $x_1$ to $x_2$ and pass through $\lambda x_{\rho}$ once. Define the diagonal matrix
    $$G_{\sigma_{x_1}\sigma_{x_2}}^{\mathrm{sf}}:=\left(\ol{P}_{\til{x}_1^{(\alpha)}\to \til{x}_2^{(\alpha)}}\otimes z^{m(\sigma_{x_1}^{(\alpha)})-m(\sigma_{x_2}^{(\alpha)})}\right)\in\Hom(E_{\til{x}_1},E_{\til{x}_2})\otimes_{\Bbbk}\Bbbk[\rho^{\vee}\cap M],$$
    where $\sigma_{x_1}^{(\alpha)},\sigma_{x_2}^{(\alpha)}$ are determined by
    $$i(p_L^{-1}(\sigma_{x_i})\cap U^{(\alpha)})^{\infty}\subset\{m(\sigma_{x_i}^{(\alpha)})\}\times(-\sigma_{x_i})^{\infty},$$
    for $i=1,2$. The superscript ``$\mathrm{sf}$" stands for ``\emph{semi-flat}" as $G_{\sigma_{x_1}\sigma_{x_2}}^{\mathrm{sf}}$ comes from the ``classical" part of $\bb{L}$. See \cite{LYZ, Fukaya_asymptotic_analysis} for the exposition of these terminologies.
    \item [(IV)] Now, suppose two chosen points $x_1,x_2\in B_{\lambda}$ are separated by some boundary wall $w\in\cu{W}_{\bb{L}^{\trop}}$ with label $\alpha\beta$ intersecting $\Int(\lambda\check{\tau})$ for some $\tau\in\Sigma$. Again, let $\gamma\subset \partial\Delta_{\lambda}$ be an embedded path from $x_1$ to $x_2$ and only intersects $w$ at a point $x\in\Int(w)$. Using the notation convention in Notation \eqref{conv:edge}, we define
    $$\Theta_{w,\til{x}_1\to\til{x}_2}:=P_{\til{x}\to\til{x}_2}\left(\Id_{E_{\til{x}}}+\sum_{(s_{\alpha\beta},x)\in\cu{S}(w,\alpha\beta)}P(\til{a}(s_{\alpha\beta}),\til{x})\otimes z^{m(\sigma_w^{(\alpha)})-m(\sigma_w^{(\beta)})}\right)\ol{P}_{\til{x}_1\to\til{x}}.$$
    Then by the flag data \eqref{eqn:slope_condition}, we have $\Theta_{w,\til{x}_1\to\til{x}_2}\in\Hom(E_{\til{x}_1},E_{\til{x}_2})\otimes_{\Bbbk}\Bbbk[\tau^{\vee}\cap M]$.
    %\begin{enumerate}
     %   \item If $w$ hits $\Int(\check{\rho}_w)$, we have $$\Theta_{w,\til{x}_1\to\til{x}_2}\in\Hom(E_{\til{x}_1},E_{\til{x}_2})\otimes_{\bb{K}}\bb{K}[\rho_w^{\vee}\cap M],$$
      %  by Case (1) in Lemma \ref{lem:boundary_wall}.
       % \item If $w$ hits a vertex $\check{\sigma}_w=\{m_w\}$ and $\dot{w}_{ext}(1)\neq m_w$, we have $$\Theta_{w,\til{x}_1\to\til{x}_2}\in\Hom(E_{\til{x}_1},E_{\til{x}_2})\otimes_{\bb{K}}\bb{K}[\rho_w^{\vee}\cap M],$$
      %  by Case (2) in Lemma \ref{lem:boundary_wall}.
      %  \item If $w$ hits a vertex $\check{\sigma}_w=\{m_w\}$ and $\dot{w}_{ext}(1)=m_w$, we have $$\Theta_{w,\til{x}_1\to\til{x}_2}\in\Hom(E_{\til{x}_1},E_{\til{x}_2})\otimes_{\bb{K}}\bb{K}[\sigma_w^{\vee}\cap M],$$
      %  by Case (3) in Lemma \ref{lem:boundary_wall}.
%\end{enumerate}
\end{enumerate}

%Suppose the lifts $x_1^{(\alpha)}\in\Int(\sigma_1^{(\alpha)}),x_2^{(\alpha')}\in\Int(\sigma_2^{(\alpha')})$ are separated by a lift of $\rho-K$. Let $\gamma_{\alpha\alpha'}:[0,1]\to L$ be a short path joining $x_1^{(\alpha)}\in\Int(\sigma_1^{(\alpha)}),x_2^{(\alpha')}\in\Int(\sigma_2^{(\alpha')})$ through a lift of $\rho-K$. Define
%$$G_{\rho}^{\mathrm{sf}}:=P_{\sigma_1^{(\alpha)}\sigma_2^{(\alpha')}}z^{m(\sigma_2^{(\alpha')})-m(\sigma_1^{(\alpha)})},$$
%where $P_{\sigma_1^{(\alpha)}\sigma_2^{(\alpha')}}$ denote the parallel transport along the short path $\gamma_{\alpha\alpha'}$.

We consider the path-ordered product $\Theta_{\cu{W}_{\bb{L}^{\trop}},\partial\Delta_{\lambda}}$ induced by parallel transports, $G_{\sigma_1\sigma_2}^{\mathrm{sf}}$'s and $\Theta_w$'s along the loop $\partial\Delta_{\lambda}$, which is contractible in $\Delta$. We choose a parametrization $\gamma:[0,1]\to\partial\Delta_{\lambda}$ so that $\gamma(0)=\gamma(1)\in\check{\sigma}$ for some $\sigma\in\Sigma(2)$.

\begin{proposition}\label{prop:consistent}
We have $\Theta_{\cu{W}_{\bb{L}^{\trop}},\partial\Delta_{\lambda}}=\Id_{E_{\gamma(0)}}$.
\end{proposition}
\begin{proof}
    By the obvious relations
    \begin{align*}
        P_{\til{x}_2\to\til{x}_3}P_{\til{x}_1\to\til{x}_2}=&\,P_{\til{x}_1\to\til{x}_3},\\
        z^{m(\sigma_1^{(\alpha)})-m(\sigma_2^{(\beta)})}z^{m(\sigma_2^{(\beta)})-m(\sigma_3^{(\gamma)})}=&\,z^{m(\sigma_1^{(\alpha)})-m(\sigma_3^{(\gamma)})},
    \end{align*}
    and Theorem \ref{thm:nonabel}, we have
    $$\Theta_{\cu{W}_{\bb{L}^{\trop}},\partial\Delta_{\lambda}}^{(\alpha\beta)}=\ol{\Theta}_{\cu{W}_{\bb{L}^{\trop}},\gamma}^{(\alpha\beta)}\otimes z^{m(\sigma^{(\alpha)})-m(\sigma^{(\beta)})}=\delta^{(\alpha\beta)}\otimes z^{m(\sigma^{(\alpha)})-m(\sigma^{(\beta)})}=\delta^{(\alpha\beta)}\otimes 1,$$
    where $\delta^{(\alpha\beta)}$ is the Kronecker delta. This proves the theorem.
\end{proof}

\begin{remark}
    It seems that interior walls of $\cu{W}_{\bb{L}^{\trop}}$ does not contribute to the path-ordered product $\Theta_{\cu{W}_{\bb{L}^{\trop}},\partial\Delta}$. But actually, they do in the way that they help us to determine the soliton set for boundary walls.
\end{remark}

%By finiteness of walls on the boundary, there exists a small neighbourhood $V_{\check{\sigma}}\subset\partial\Delta$ of each vertex $\check{\sigma}$ so that $V_{\check{\sigma}}$ does not contain any endpoints of boundary walls. For a pair of maximal cones $\sigma_1,\sigma_2\in\Sigma$ that correspond to the vertices $\check{\sigma}_1,\check{\sigma}_2\in\Delta\cap M$, let $\gamma:[0,1]\to\partial\Delta$ be a smooth embedded path going from $V_{\check{\sigma}_1}-\check{\sigma}_1$ to $V_{\check{\sigma}_2}-\check{\sigma}_2$. By Theorem \ref{thm:consistent}, the path order product $G_{\sigma_1\sigma_2}$ associated to $\gamma$ 
%is in fact independent of $\gamma$ hence well-defined. By Theorem \ref{thm:consistent} again, $(G_{\sigma_1\sigma_2})_{\sigma_1,\sigma_2\in\Sigma(2)}$ is a 1-cocycle and hence defines

We now construct a toric vector bundle on $X_{\Sigma}$ by specifying its transition maps. Let $\sigma_1,\sigma_2$ be maximal cones and $\gamma_{\sigma_1\sigma_2}:[0,1]\to \partial\Delta_{\lambda}$ be the edge from the vertex $\lambda\check{\sigma}_1\subset\Delta_{\lambda}$ to the vertex $\lambda\check{\sigma}_2\subset\Delta_{\lambda}$. Then we define $G_{\sigma_1\sigma_2}$ to be the path order product of $\gamma_{\sigma_1\sigma_2}$ induced by parallel transports, $G_{\sigma_1\sigma_2}^{\mathrm{sf}}$ and $\Theta_w$'s with $w\in\cu{W}_{\bb{L}}$. However, the boundary points $\lambda\check{\sigma}_1,\lambda\check{\sigma}_2$ of $\gamma$ may contain in a boundary wall. When this happens, we use the orientation of $\gamma_{\sigma_1\sigma_2}$ to determine how they should be included/excluded:
\begin{enumerate}
    \item [(a)] If $\gamma_{\sigma_1\sigma_2}$ is orientation preserving, then we include the wall-crossing factor with the corresponding wall intersecting $\lambda\check{\sigma}_1$ and exclude the one that intersect $\lambda\check{\sigma}_2$.
    \item [(b)] If $\gamma_{\sigma_1\sigma_2}$ is orientation reversing, then we include the wall-crossing factor with the corresponding wall intersecting $\lambda\check{\sigma}_2$ and exclude the one that intersect $\lambda\check{\sigma}_1$. 
\end{enumerate}
In particular, we have $G_{\sigma_1\sigma_2}=G_{\sigma_2\sigma_1}^{-1}$. Proposition \ref{prop:consistent} then implies $(G_{\sigma_1\sigma_2})_{\sigma_1,\sigma_2\in\Sigma(2)}$ forms a 1-cocycle and hence gives a toric vector bundle $\cu{E}_{\lambda}(\cu{W}_{\bb{L}^{\trop}},\cu{L}^{\tw})$ whose tropicalization is $\bb{L}^{\trop}$. Via the identification $\cu{M}^{\tw}(\til{L},GL_1)\cong\cu{M}(L,GL_1)$, we write $\cu{E}_{\lambda}(\cu{W}_{\bb{L}^{\trop}},\cu{L})$ for the toric vector bundle $\cu{E}_{\lambda}(\cu{W}_{\bb{L}^{\trop}},\cu{L}^{\tw})$.

\begin{proposition}
    The toric vector bundle $\cu{E}_{\lambda}(\cu{W}_{\bb{L}^{\trop}},\cu{L})$ is independent of $\lambda\sim 1$ and the chosen points $\{x_{\rho}\}_{\rho\in\Sigma(1)}$.
\end{proposition}
\begin{proof}
    It is clear that the path-ordered product $G_{\sigma_1\sigma_2}$ is independent of $\lambda$ as long as $\lambda$ is close to 1. Let  $\{x_{\rho}\},\{x_{\rho}'\}$ be two set of generic points. If $w(1)$ is an endpoint that lies between the segment $[x_{\rho},x_{\rho}']\subset\check{\rho}$ joining $x_{\rho}$ and $x_{\rho}'$, the wall-crossing factors are related by
    $$\Theta_wG_{\sigma_1\sigma_2}^{\mathrm{sf}}=G_{\sigma_1\sigma_2}^{\mathrm{sf}}\Theta_w'.$$
    Hence the path-ordered product along $\gamma_{\sigma_1\sigma_2}$ stays the same.
\end{proof}

\begin{remark}
    It can be further shown that $\cu{E}_{\lambda}(\cu{W}_{\bb{L}^{\trop}},\cu{L})$ is independent of the convention (a), (b) above in the sense that we can include/exclude some of them in the path order product as long as they just show up once. This is because those wall-crossing automorphisms that correspond to a boundary wall with $w(1)=\check{\sigma}_w$ are automorphisms on the affine chart $U(\sigma_w)$.
\end{remark}

We now get a well-defined toric vector bundle $\cu{E}(\cu{W}_{\bb{L}^{\trop}},\cu{L})$ on $X_{\Sigma}$ defined over $\Bbbk$. By varying $\cu{L}$, we get a $b_1(L)$-dimensional family of such toric vector bundles.

\begin{theorem}\label{thm:TVB_via_SN}
    Let $\bb{L}^{\trop}$ be a tropical Lagrangian multi-section over $\Sigma$. Suppose there exists a spectral network $\cu{W}_{\bb{L}^{\trop}}$ on $\Delta$ subordinate to a simply branched covering map $p:L\to\Delta$ with flag data given by \eqref{eqn:slope_condition}. Then for any $\Bbbk^{\times}$-local system on $L$, there exists a toric vector bundle $\cu{E}(\cu{W}_{\bb{L}^{\trop}},\cu{L})$ on $X_{\Sigma}$ with tropicalization $\bb{L}^{\trop}$. Furthermore, the assignment $\Psi_{\cu{W}_{\bb{L}^{\trop}}}:\cu{L}\mapsto\cu{E}(\cu{W}_{\bb{L}^{\trop}},\cu{L})$ gives an injection from $\cu{M}(L,GL_1)$, the moduli space of rank 1 local systems on $L$, to $\cu{M}(X_{\Sigma},\bb{L}^{\trop})$, the moduli space of toric vector bundles with tropicalization $\bb{L}^{\trop}$.
\end{theorem}
\begin{proof}
    It remains to prove that $\cu{M}(L,GL_1)\cong\cu{M}^{\tw}(\til{L},GL_1)$ injects to the moduli space of toric vector bundles with tropicalization $\bb{L}^{\trop}$. Suppose $\cu{E}(\cu{W}_{\bb{L}^{\trop}},\cu{L})\cong\cu{E}(\cu{W}_{\bb{L}^{\trop}},\cu{L}')$ as toric vector bundles. Then they are isomorphic as $T$-equivariant bundles on the toric boundary divisor $\partial X_{\Sigma}$ after restriction. By separability of $\bb{L}^{\trop}$, these ``boundary" bundles have $G_{\rho}^{\mathrm{sf}}$'s as transition maps. We then obtain the relation
    $$P_{\til{x}_1^{(\alpha)}\to\til{x}_2^{(\alpha')}}h_{\sigma_2^{(\alpha')}\sigma_2^{(\alpha')}}=h_{\sigma_1^{(\alpha)}\sigma_1^{(\alpha)}}P_{\til{x}_1^{(\alpha)}\to \til{x}_2^{(\alpha')}}',$$
    for some non-zero constant $h_{\sigma_1^{(\alpha)}\sigma_1^{(\alpha)}},h_{\sigma_2^{(\alpha')}\sigma_2^{(\alpha')}}$ whenever $\sigma_1^{(\alpha)}\cap\sigma_2^{(\alpha')}\in\Sigma_{L^{\trop}}(1)$. This means the parallel transports of $\cu{L}^{\tw},{\cu{L}^{\tw}}'$ along any curves in $L$ are the same after rescaling their frames by non-zero constants. This shows that $\cu{L}^{\tw}\cong{\cu{L}^{\tw}}'$ as twisted-flat local systems.
\end{proof}

By Lemma \ref{lem:boundary_wall}, if $\cu{W}_{\bb{L}}$ is a spectral network, it must respect the flag data given in \eqref{eqn:slope_condition}. Hence Theorem \ref{thm:TVB_via_SN} implies the following

\begin{corollary}\label{cor:TVB_via_SN}
    Let $\bb{L}$ be an $r$-fold $\Lambda_{\bb{L}^{\trop}}$-admissible Lagrangian multi-section and suppose $\cu{W}_{\bb{L}}$ is a non-degenerated spectral network subordinate. Then for any $\Bbbk^{\times}$-local system $\cu{L}$ on the domain $L$, there exists a rank $r$ toric vector bundle $\cu{E}(\bb{L},\cu{L})$ whose tropicalization is $\bb{L}^{\trop}$ and the assignment $\Psi_{\cu{W}_{\bb{L}}}:\cu{L}\mapsto\cu{E}(\bb{L},\cu{L})$ is an injection.   
\end{corollary}

Combining Corollary \ref{cor:TVB_via_SN} with Theorem \ref{thm:existence}, we obtain

\begin{corollary}\label{cor:well_behave_TVB}
    If $\bb{L}$ is a well-behaved $r$-fold $\Lambda_{\bb{L}^{\trop}}$-admissible Lagrangian multi-section, then for any $\Bbbk^{\times}$-local system $\cu{L}$ on the domain $L$, there exists a rank $r$ toric vector bundle $\cu{E}(\bb{L},\cu{L})$ whose tropicalization is $\bb{L}^{\trop}$ and the assignment $\Psi_{\cu{W}_{\bb{L}}}:\cu{L}\mapsto\cu{E}(\bb{L},\cu{L})$ is an injection.  
\end{corollary}

%\begin{remark}
   % We don't expect the assignment $\cu{L}\mapsto\cu{E}(\bb{L},\cu{L})$ to be surjective in general as the moduli of toric vector bundles can be ``arbitrarily" singular \cite{Payne_moduli_of_TVB}. However, we expect surjectivity if one runs over all possible unobstructed realizations of $\bb{L}^{\trop}$.
%\end{remark}

\begin{remark}
    As mentioned in Remark \ref{rmk:grad_trees}, we think of the gradient trees are the tropical limit of Maslov index zero holomorphic disks. Moreover, Nho has proved in \cite{Nho_spectral_network_family_fleor} that non-abelianization is same as the family Floer assignment $\xi\mapsto HF((\bb{L},\cu{L}),T^*_{\xi}N_{\bb{R}})$, it is  very natural to expect $\cu{E}(\bb{L},\cu{L})$ is the equivariant mirror bundle of $(\bb{L},\cu{L})$.
\end{remark}

\section{Examples and applications}\label{sec:existence}

In this section, we will provide some examples of spectral networks subordinate to $\Lambda_{\bb{L}^{\trop}}$-admissible Lagrangian multi-sections, and will end by giving two applications of our construction.

\subsection{Examples}

As mentioned in Section \ref{sec:spec_net_L}, the work of \cite{OS} provides us many examples of non-degenerated networks so that our construction in Section \ref{sec:mirror_construction} can be applied. We now give some examples of non-degenerated spectral networks subordinate to those 2-fold Lagrangian multi-sections constructed in \cite[Theorem 5.8]{OS}, one non-example, and one 3-fold example.

To begin, we recall that in \cite[Section 5.1]{OS}, Oh and the author associated each 2-fold tropical Lagrangian multi-section with an important integer, which we now recall. Given a 2-fold tropical Lagrangian multi-section $\bb{L}^{\trop}$, let $\gamma\in\text{Deck}_{L^{\trop}/N_{\bb{R}}}(p)$ be the unique deck transformation over $N_{\bb{R}}$. Let $C:=p^{-1}(S^1)$ be the preimage of the unit circle $S^1\subset N_{\bb{R}}$ centred at the origin. If $L^{\trop}\cong\bb{R}^2$, parametrize $C$ by $[0,2\pi)$ while if $L^{\trop}\cong\bb{R}^2\sqcup\bb{R}^2$, $C=C^+\sqcup C^-$ and we also parametrize $C^{\pm}$ by $[0,2\pi)$. The number $N_{\bb{L}^{\trop}}$ is defined to be
$$N_{\bb{L}^{\trop}}:=\begin{dcases}
    \#\text{Graph}(\varphi^{\trop}|_{[0,\pi)})\cap\text{Graph}(\varphi^{\trop}\circ\gamma|_{[0,\pi)}) & \text{ if }L^{\trop}=\bb{R}^2,\\
    \#\text{Graph}(\varphi^{\trop}|_{[0,2\pi)})\cap\text{Graph}(\varphi^{\trop}\circ\gamma|_{[0,2\pi)}) & \text{ if }L^{\trop}=\bb{R}^2\sqcup\bb{R}^2.
\end{dcases}$$
Separability ensures the intersections are transversal and only occur at interior of maximal cones. One of the main results in \cite{OS} is that a 2-fold tropical Lagrangian multi-section can be realised by a connected embedded Lagrangian multi-section if $N_{\bb{L}^{\trop}}\geq 3$. As shown in \cite{OS}, the number of branch points of such realization is $N-2$, so the topology of an embedded realization is completely determined, namely, any embedded realization must have $b_1=N_{\bb{L}^{\trop}}-3$.
        
\begin{example}\label{eg:rank2_N_generic}
    Let $\sigma_1,\dots,\sigma_{N_{\bb{L}^{\trop}}}\in\Sigma(2)$ be the maximal cones where the two graphs have an intersection in $\Int(\sigma_i)\cap S^1$, for $i=1,\dots,N_{\bb{L}^{\trop}}$. They are dual to a collection of vertices $\check{\sigma}_1,\dots,\check{\sigma}_{N_{\bb{L}^{\trop}}}$ in the polytope $\Delta$. We colour them by black in Figure \ref{fig:examples}. The slope condition \eqref{eqn:slope_condition} gives each edge of $\Delta$ a label, 12 or 21. The label changes from $\alpha\beta$ to $\beta\alpha$ whenever an $\alpha\beta$-edge are connected by a black vertices or passing through a branch cut of $\check{p}_L:L\to\ring{\Delta}$. The following are two examples of non-degenerated spectral networks subordinate to these 2-fold Lagrangian multi-sections.
        \begin{figure}[H]
	       \centering
	       \includegraphics[width=120mm]{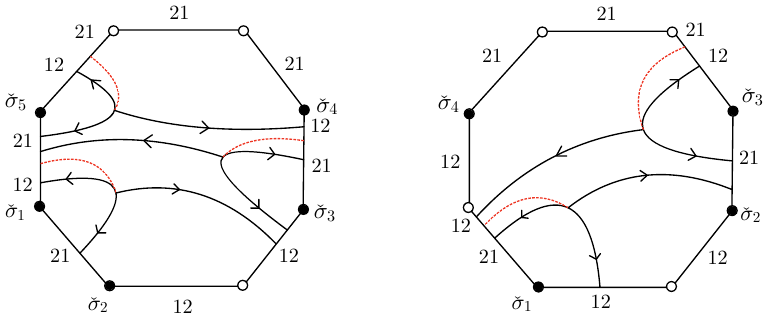}
	       \caption{Two examples of spectral networks subordinate to 2-fold Lagrangian multi-sections constructed in \cite{OS}. The black vertices correspond to those maximal cones for which the two graphs intersect. The Lagrangian multi-section on the left-handed figure realizes a $5$-generic tropical Lagrangian multi-section and the one on the right-handed side realizes a $4$-generic one.}
	       \label{fig:examples}
        \end{figure}
        When $X_{\Sigma}=\bb{P}^2$, we have shown in \cite[Section 6]{OS} that the mirror of any indecomposable rank 2 toric vector bundle on $\bb{P}^2$ is an embedded 2-fold Lagrangian multi-section that is diffeomorphic to $\bb{R}^2$ ($N=3$). In particular, there's only one branch point. The spectral networks subordinate to these Lagrangian multi-sections consist of exactly 3 walls emitting from the branch point and each edge has exactly one wall hitting its relative interior. See Figure \ref{fig:P2_example}
        \begin{figure}[H]
	       \centering
	       \includegraphics[width=40mm]{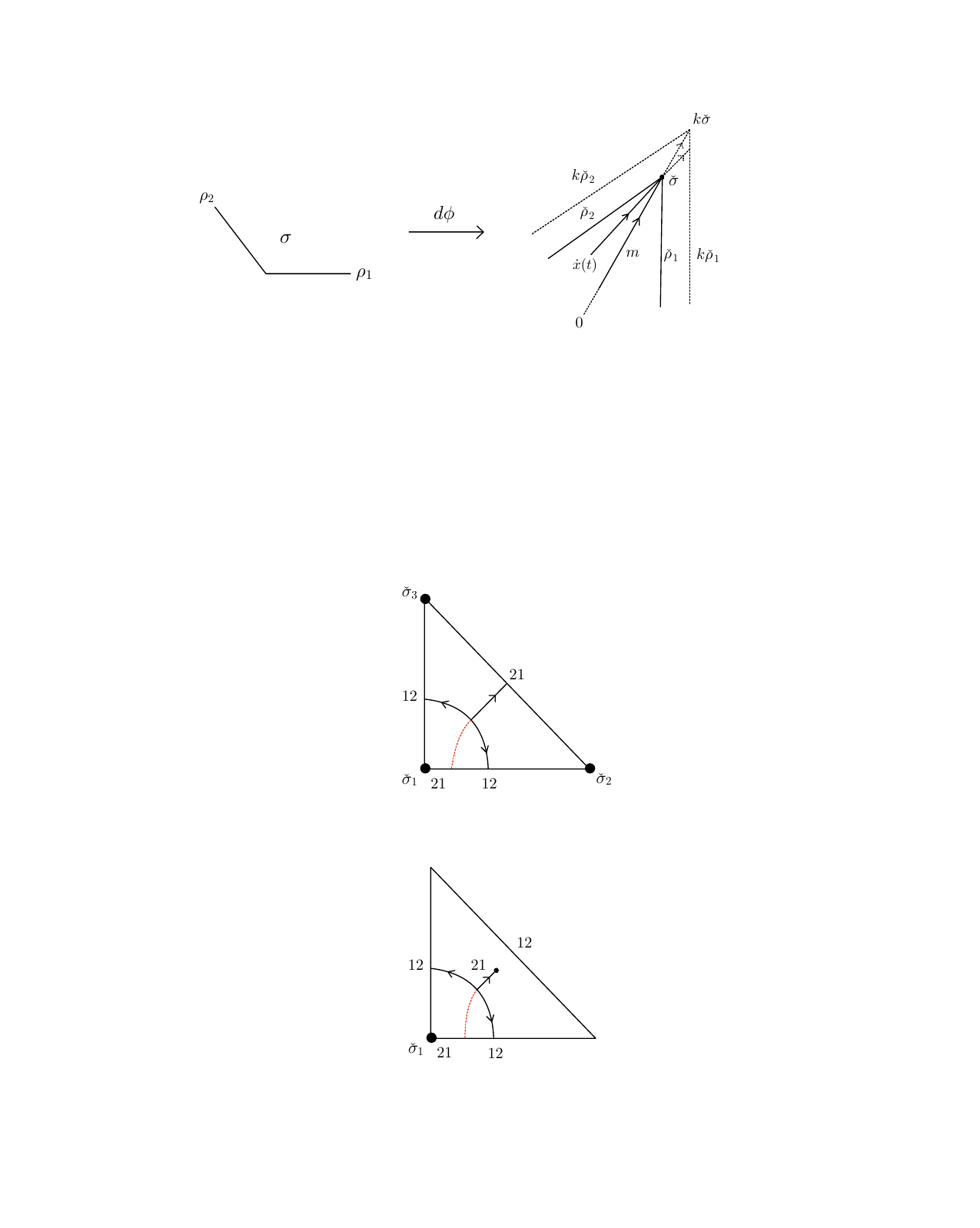}
	       \caption{The spectral network subordinate to the mirror Lagrangian multi-section of a rank 2 indecomposable toric vector bundle on $\bb{P}^2$.}
	       \label{fig:P2_example}
        \end{figure}
\end{example}

    \begin{example}\label{eg:non_example}
        We provide an interesting non-example. Consider the 2-fold tropical Lagrangian multi-section over $\Sigma_{\bb{P}^2}$ given in \cite[Example 5.22]{Suen_trop_lag}, which cannot be B-realizable. This tropical Lagrangian multi-section is 1-generic. One can realize $\bb{L}^{\trop}$ by an immersed Lagrangian multi-section $\bb{L}$ with a simply connected domain. But $\bb{L}$ is actually obstructed by a non-index 1 immersed point. The LT lines of this Lagrangian are depicted in Figure \ref{fig:non_example}.
        \begin{figure}[H]
	       \centering
	       \includegraphics[width=40mm]{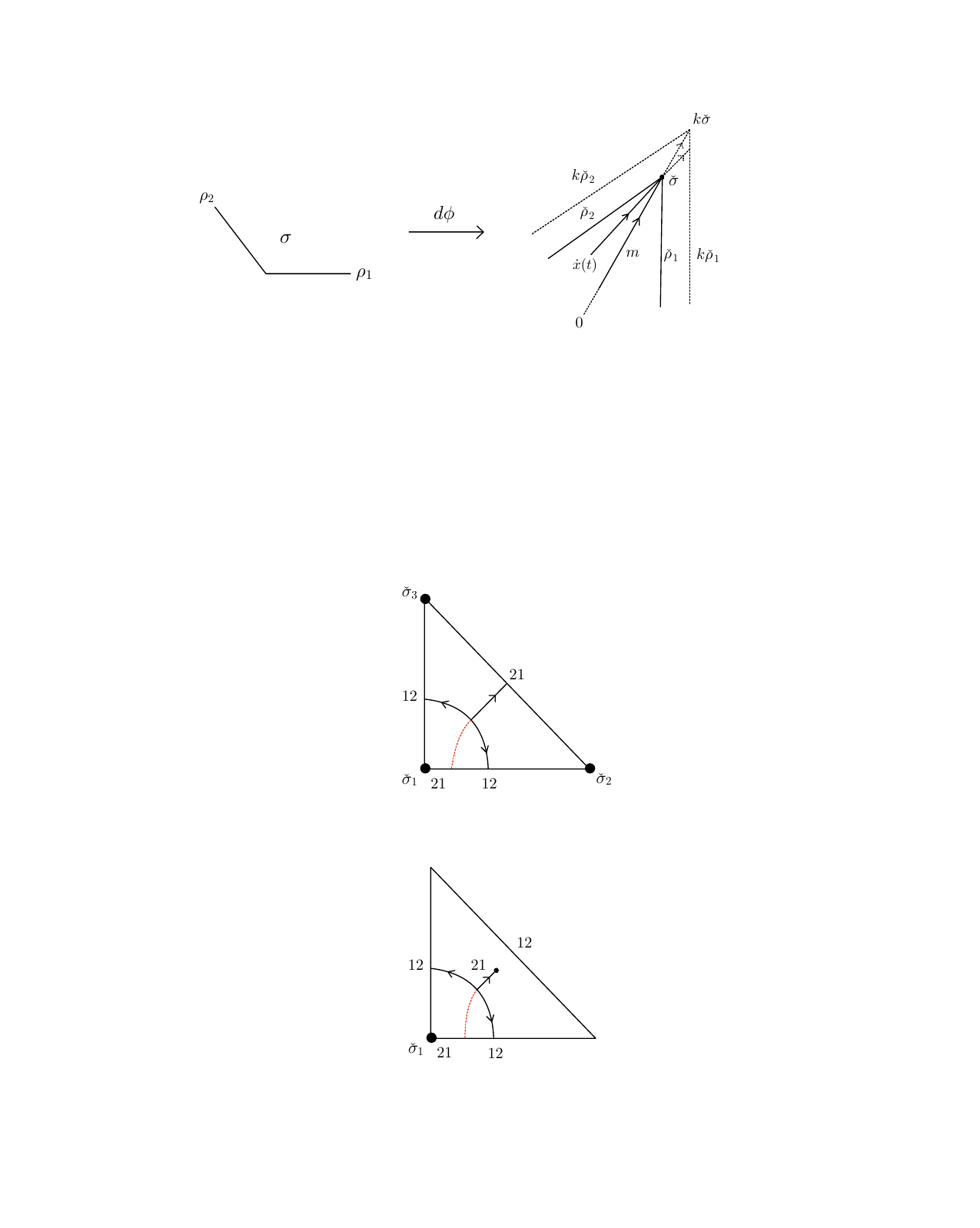}
	       \caption{A 21-LT line terminates at a non-index 1 critical point.}
	       \label{fig:non_example}
        \end{figure}
        This configuration does not give a spectral network around the non-index 1 critical point. Floer theoretically, the flow line from the branch point to that critical should be viewed as a fishtail-like holomorphic disk bounded by $\bb{L}$ passing through the immersed point. This holomorphic disk obstructs the Floer theory of $\bb{L}$.
    \end{example}

\begin{example}
  We also give a rank 3 example. The tropical Lagrangian multi-section $\bb{L}^{\trop}$ over $\Sigma_{\bb{P}^2}$ as shown in Figure \ref{fig:trop_lag_rank3}
      \begin{figure}[H]
	       \centering
	       \includegraphics[width=120mm]{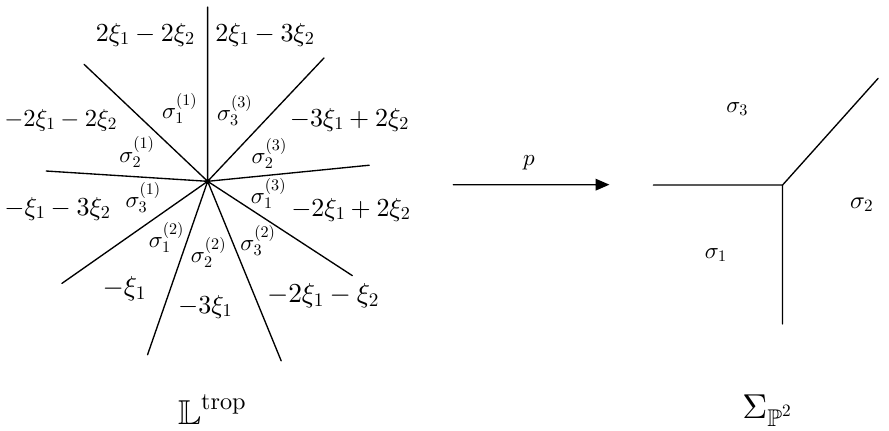}
	       \caption{The 3-fold tropical Lagrangian multi-section $\bb{L}^{\trop}$ over $\Sigma_{\bb{P}^2}$. }
	       \label{fig:trop_lag_rank3}
    \end{figure}
%By calculating the slope differences $m(\sigma_i^{(\alpha)})-m(\sigma_i^{(\beta)})$, for $i=1,2,3$ and $\alpha,\beta=1,2,3$, we find that
   % \begin{align*}
      %  m(\sigma_1^{(1)})-m(\sigma_1^{(2)}),\,m(\sigma_1^{(2)})-m(\sigma_1^{(3)}),\,m(\sigma_1^{(1)})-m(\sigma_1^{(3)})\in(\sigma_1\cap\sigma_2)^{\vee}\cap M,\\
      %  m(\sigma_2^{(2)})-m(\sigma_2^{(1)}),\,m(\sigma_2^{(3)})-m(\sigma_2^{(2)}),\,m(\sigma_2^{(3)})-m(\sigma_2^{(1)})\in(\sigma_2\cap\sigma_3)^{\vee}\cap M,\\
      %  m(\sigma_3^{(1)})-m(\sigma_3^{(2)}),\,m(\sigma_3^{(2)})-m(\sigma_3^{(3)}),\,m(\sigma_3^{(1)})-m(\sigma_3^{(3)})\in(\sigma_3\cap\sigma_1)^{\vee}\cap M
 %   \end{align*}
 It was shown in \cite[Example 5.22]{OS} that this tropical Lagrangian multi-section can be realized by a genus 1 embedded 3-fold Lagrangian multi-section $\bb{L}$ with one cylindrical end, branched over $N_{\bb{R}}$ at two non-simple branch points. By perturbing the two non-simple branched points, we obtain a spectral network as shown on the right-handed side of Figure \ref{fig:rank3_SN}. Note that in this 3-fold example, scattering occurs.
    \begin{figure}[H]
	       \centering
	       \includegraphics[width=120mm]{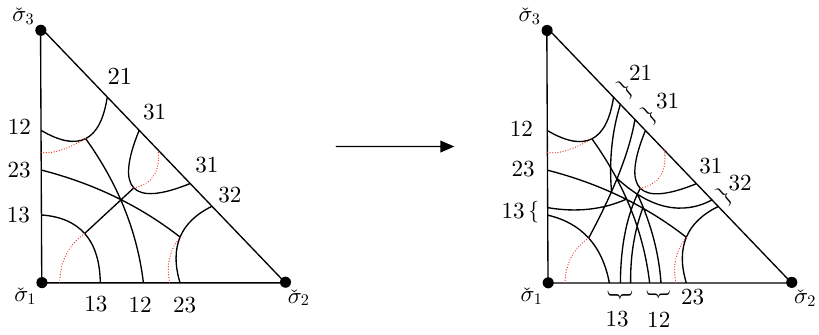}
	       \caption{The left-handed side shows a degenerated spectral network subordinate to a 3-fold Lagrangian multi-section while the right-handed side is a non-degenerated one, obtained by perturbing the left-handed one.}
	       \label{fig:rank3_SN}
    \end{figure}
\end{example}

\subsection{Applications}

We now give two applications on Theorem \ref{thm:TVB_via_SN}; one on the $A$-side and one on the $B$-side.

The first application is based on Example \ref{eg:rank2_N_generic}, which is an enhancement of \cite[Corollary 5.16]{OS}.

    \begin{theorem}\label{thm:A_real}
        A 2-fold tropical Lagrangian multi-section can be realized by a connected embedded exact Lagrangian multi-section in $T^*M_{\bb{R}}$ if and only if $N_{\bb{L}^{\trop}}\geq 3$.
    \end{theorem}
    \begin{proof}
        When $N_{\bb{L}^{\trop}}$ is odd, this is essentially \cite[Corollary 5.16]{OS}. When $N_{\bb{L}^{\trop}}$ is even, in the same paper, we have already shown that $\bb{L}^{\trop}$ can be realized by a connected embedded Lagrangian multi-section if $N_{\bb{L}^{\trop}}\geq 4$. It remains to prove exactness and the converse. For exactness, we can first choose the local model $L_{f_d}$ in \cite[Section 5.2.2]{OS} to be exact with respect to the canonical Liouville form $\lambda_{N_{\bb{R}}}$ on $T^*N_{\bb{R}}$ (See \cite[Remark 3.10]{Nho_spectral_network_family_fleor} for the existence of such exact Lagrangians). Hence we can make the Lagrangian multi-sections constructed in \cite[Theorem 5.15]{OS} to be $\lambda_{N_{\bb{R}}}$-exact. Since $\lambda_{M_{\bb{R}}}+\lambda_{N_{\bb{R}}}=df$ with $f(x,\xi):=\inner{x,\xi}$, these multi-sections are also $\lambda_{M_{\bb{R}}}$-exact (See Remark \ref{rmk:finite} for the primitive of $\bb{L}$). If $\bb{L}^{\trop}$ can be realized by a connected embedded Lagrangian multi-section $\bb{L}$, then there is a spectral network subordinate to $\bb{L}$. If $N=2$, there exist two black vertices as in Figure \ref{fig:examples} and dividing the set of edges into two parts as shown in the figure on the left in Figure \ref{fig:N_leq_2}.
        \begin{figure}[H]
	       \centering
	       \includegraphics[width=120mm]{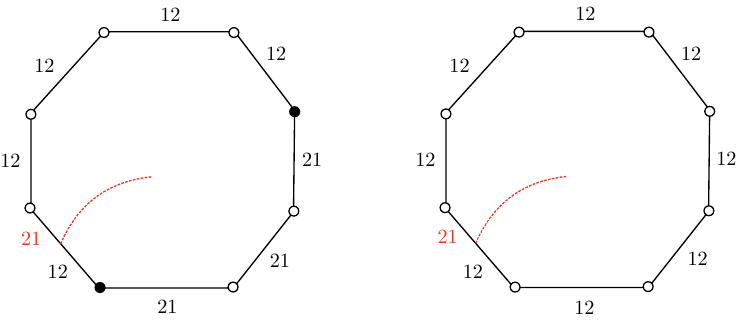}
	       \caption{The case $N\leq 2$. The red dotted line represents a branch cut, causing a label change and thus no consistent labelling.}
	       \label{fig:N_leq_2}
        \end{figure}
        However, as there exists at least one branch point, there is no consistent labelling. The case when $N=0$ is similar. This argument also applies when $N_{\bb{L}^{\trop}}$ is odd, hence showing that $N_{\bb{L}^{\trop}}\geq 3$ is a necessary condition for the $A$-realization problem \emph{without} using mirror symmetry.
    \end{proof}

    We now move to our second application. It was suggested in \cite{Spectral_networks} that (at when $r=2$) cluster structure \cite{FZ_Cluster1, FZ_Clustert2, BFZ_Cluster3} arise by connecting two non-degenerated spectral networks through a degenerated one, so we expect by passing from $\cu{W}_{\bb{L}}$ to $\cu{W}_{\bb{L}'}$ through a degenerated one, we should be able to obtain cluster structure on $\cu{M}(X_{\Sigma},\bb{L}^{\trop})$.
    
    We first recall the notion of $\cu{X}$-cluster structure (without frozen variables). Let $Q$ be an acyclic (without 1- or 2-cycles) quiver with a finite vertex set $V(Q)$ and $B$ be the \emph{exchange matrix} given by
    $$B_{ij}:=\#(i\to j)-\#(j\to i),$$
    with $i,j\in V(Q)$. The data $(Q,B)$ is called a \emph{seed}. Note that $B$ and $Q$ determine each other. For each $k\in V(Q)$, the $k$-mutation $(\mu_k(Q),\mu_k(B))$ is defined via the exchange matrix
        $$\mu_k(B)_{ij}:=\begin{cases}
            -B_{ij} & \text{ if }k=i,j,\\
            B_{ij}+\frac{1}{2}(B_{ik}|B_{kj}|+|B_{ik}|B_{kj}) & \text{ otherwise}.
        \end{cases}$$
    For each vertex $i\in V(Q)$, we assign a formal invertible variable $X_i$, and for a vertex $k\in V(Q)$, we define the birational map
    $$\mu_k^*X_i:=
    \begin{cases}
        X_k^{-1} & \text{ if }i=k\\
        X_i(1+X_k^{-\text{sgn}(B_{ik})})^{-B_{ik}} & \text{ if }i\neq k.
    \end{cases}$$
    The \emph{$\cu{X}$-cluster variety associate to $Q$} is defined to be the gluing of $\Spec(\Bbbk[X_i^{\pm 1}|i\in \mu_k(Q)])\cong(\Bbbk^{\times})^{\#V(Q)}$ with respect to the mutations:
    $$\cu{X}(Q):=\lim_{\substack{\longrightarrow\\k}}\Spec(\Bbbk[X_i^{\pm 1}\,|\,i\in \mu_k(Q)]).$$

    \begin{theorem}\label{thm:cluster}
    Let $\bb{L}^{\trop}$ be a 2-dimensional 2-fold tropical Lagrangian multi-section with $N_{\bb{L}^{\trop}}\geq 3$. Then the moduli space $\cu{M}(X_{\Sigma},\bb{L}^{\trop})$ is an $A_{N_{\bb{L}^{\trop}}-3}$-type $\cu{X}$-cluster variety.
    \end{theorem}
    \begin{proof}
        Throughout the proof, we use the identification $\cu{M}(L,GL_1)\cong\cu{M}^{\tw}(\til{L},GL_1)$ and write $\cu{L}^{\tw}\in\cu{M}(\til{L},GL_1)$ for the image of $\cu{L}\in\cu{M}(L,GL_1)$ under this identification.
        
        It is known in \cite[Section 5]{HN_SN_FN_coordinates} that the non-abelianization map $$\ol{\Psi}_{\cu{W}}:\cu{M}(L,GL_1)\to\cu{M}_F(\Delta,p_1,\dots,p_{N_{\bb{L}^{\trop}}},GL_2)$$
        has open dense image, where $\cu{M}_F(\Delta,p_1,\dots,p_{N_{\bb{L}^{\trop}}},GL_2)$ is the moduli space of \emph{framed rank 2 local systems} on a 2-disk with boundary markings $p_1,\dots,p_{N_{\bb{L}^{\trop}}}$ in the sense of Fock-Goncharov \cite{FG_moduli_space}. The framing is given by taking flat subspaces of the non-abelianization at the markings. See \cite[Section 10.2]{Spectral_networks} for details. There is an open embedding
        $$\cu{M}(X_{\Sigma},\bb{L}^{\trop})\hookrightarrow\cu{M}_F(\Delta,p_1,\dots,p_{N_{\bb{L}^{\trop}}},GL_2).$$
        This embedding is obtained by first forgetting the monomial $z^{m(\sigma_1^{(\alpha)})-m(\sigma_2^{(\beta)})}$ in the transition maps of $\cu{E}(\bb{L},\cu{L})$ and mapping the resulting rank 2 local system together with the invariant flag (which is a line when $r=2)$ to $\cu{M}_F(\Delta,p_1,\dots,p_{N_{\bb{L}^{\trop}}},GL_2)$. This gives, for any $\Lambda_{\bb{L}^{\trop}}$-admissible embedded Lagrangian multi-section $\bb{L}$, a triangle of embeddings
        \[
        \begin{tikzcd}
            & \cu{M}(L,GL_1) \arrow[dl,"\Psi_{\cu{W}_{\bb{L}}}"'] \arrow{dr}{\ol{\Psi}_{\cu{W}_{\bb{L}}}} \\
            \cu{M}(X_{\Sigma},\bb{L}^{\trop}) \arrow[rr,hook] && \cu{M}_F(\Delta,p_1,\dots,p_{N_{\bb{L}^{\trop}}},GL_2)
        \end{tikzcd}
        \]
        In particular, the image of $\Psi_{\cu{W}_{\bb{L}}}:\cu{M}(L,GL_1)\to\cu{M}(X_{\Sigma},\bb{L}^{\trop})$
        is also a dense open subset.
        
        Let $V(Q)=\{\gamma_i\}_{i=1}^{N_{\bb{L}^{\trop}}-3}$ be the set of homotopy classes of paths in $\Delta$ joining two branched points that do not pass through $|\cu{W}_{\bb{L}}|$. With a choice of orientation, each $\gamma_i$ lifted uniquely to a cycle $\til{\gamma}_i\subset\til{L}$ and their projection to $L$ form a basis of $H_1(L;\bb{Z})$. The exchange matrix $B$ is given by $B_{ij}:=\inner{\til{\gamma}_i,\til{\gamma}_j}=\pm 1$ and it determines a quiver $Q$ with vertex set $V(Q)$. The pair $(Q,B)$ defines a seed. Define
        $$X_i:=X_{\gamma_i}:=\hol_{\cu{L}^{\tw}}(\til{\gamma}_i)\in\Bbbk^{\times}.$$
        These coordinates give an identification $\cu{M}(L,GL_1)\cong(\Bbbk^{\times})^{N_{\bb{L}^{\trop}}-3}$ and hence coordinates
        $$(\Bbbk^{\times})^{N_{\bb{L}^{\trop}}-3}\cong\cu{M}(L,GL_1)\to\cu{M}(X_{\Sigma},\bb{L}^{\trop})$$
        Suppose we have two embedded $A$-realizations $\bb{L},\bb{L}'$ (they share a common domain $L$) of $\bb{L}^{\trop}$ so that locally around three branch points, their spectral networks are related as shown in Figure \ref{fig:mutation_tex}. Suppose $\cu{L},\cu{L}'\in\cu{M}(L,GL_1)$ are $\Bbbk^{\times}$-local systems on $L$ such that $\cu{E}(\bb{L},\cu{L})=\cu{E}(\bb{L}',\cu{L}')$. Let $X_i':=\hol_{\cu{L}'}(\til{\gamma}_i)$.
        \begin{figure}[H]
	       \centering
	       \includegraphics[width=120mm]{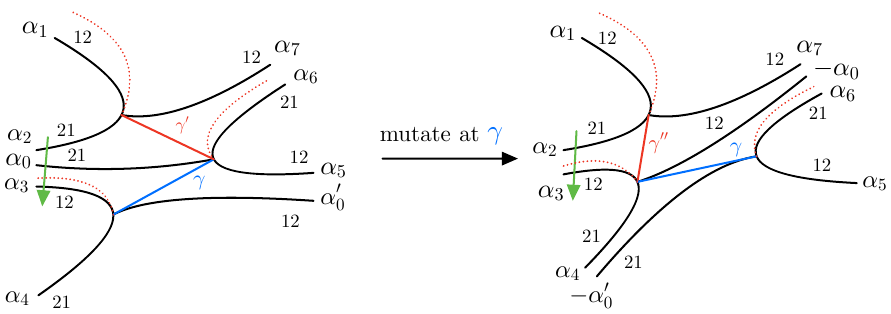}
	       \caption{For the lifts $\til{\alpha}_i\subset\til{L}$ of the paths $\alpha_i\subset\Delta$'s, we have $\til{\alpha}_1=-\til{\alpha}_2$, $\til{\alpha}_3=-\til{\alpha}_4$, and $\til{\alpha}_5=-\til{\alpha}_6$. Mutation is performed at $\gamma$. The wall-crossing factors across the green arrows are the same, giving us the cluster transformations.}
	       \label{fig:mutation_tex}
        \end{figure}
        Note that the pairs of paths $\alpha_1,\alpha_2$ and $\alpha_3,\alpha_4$ must be separated by black vertices, say by $\check{\sigma}_1,\check{\sigma}_2$, respectively. In particular, the composition of the wall-crossing automorphisms along the green arrows gives the transition map from the affine chart $U(\sigma_1)$ to $U(\sigma_2)$. By assumption, $\cu{E}(\bb{L},\cu{L})=\cu{E}(\bb{L}',\cu{L}')$, the wall-crossing factors along the two green paths are the same. By our construction in Section \ref{sec:mirror_construction}, we have
        $$\begin{pmatrix}
            1 & \ol{P}_{\cu{L}^{\tw}}(\til{\alpha}_3)\\
            0 & 1
        \end{pmatrix}
        \begin{pmatrix}
            0 & 1\\
            1 & 0
        \end{pmatrix}
        \begin{pmatrix}
            1 & 0\\
            \ol{P}_{\cu{L}^{\tw}}(\til{\alpha}_0) & 1
        \end{pmatrix}
        \begin{pmatrix}
            1 & 0\\
            \ol{P}_{\cu{L}^{\tw}}(\til{\alpha}_2) & 1
        \end{pmatrix}
        =\begin{pmatrix}
            1 & \ol{P}_{\cu{L}^{\tw'}}(\til{\alpha}_3)\\
            0 & 1
        \end{pmatrix}\begin{pmatrix}
            0 & 1\\
            1 & 0
        \end{pmatrix}
        \begin{pmatrix}
            1 & 0\\
            \ol{P}_{\cu{L}^{\tw'}}(\til{\alpha}_2) & 1
        \end{pmatrix}.$$
        The off-diagonal matrices indicate the change in branches when the green paths pass through the branch cut (red-dotted line). Hence we obtain the relation
        $$\ol{P}_{\cu{L}^{\tw'}}(\til{\alpha}_2)+\ol{P}_{\cu{L}^{\tw'}}(\til{\alpha}_3)=\ol{P}_{\cu{L}^{\tw}}(\til{\alpha}_2)+\ol{P}_{\cu{L}^{\tw}}(\til{\alpha}_0)+\ol{P}_{\cu{L}^{\tw}}(\til{\alpha}_3).$$
        But $\til{\alpha}_1=-\til{\alpha}_2$ and $\ol{P}_{\cu{L}^{\tw}}(\til{\alpha}_1)=\ol{P}_{\cu{L}^{\tw'}}(\til{\alpha}_1)$ imply
        $$\ol{P}_{\cu{L}^{\tw'}}(\til{\alpha}_3)=\ol{P}_{\cu{L}^{\tw}}(\til{\alpha}_0)+\ol{P}_{\cu{L}^{\tw}}(\til{\alpha}_3).$$
        Using the relations $\til{\gamma}=\til{\alpha}_3-\til{\alpha}_0$, $\til{\gamma}'=\til{\alpha}_0-\til{\alpha}_2$ and $\til{\gamma}''=\til{\alpha}_3-\til{\alpha}_2$, we obtain
        $$\hol_{\cu{L}^{\tw'}}(\til{\gamma}'')=\hol_{\cu{L}^{\tw}}(\til{\gamma}')(1+\hol_{\cu{L}^{\tw}}(\til{\gamma})),$$
        which is the mutation of the variable $X_{\gamma'}$ at $\gamma$. One can also check that
        $$\hol_{\cu{L}^{\tw'}}(\til{\gamma})=\hol_{\cu{L}^{\tw}}(\til{\gamma})^{-1},$$
        which is the mutation of $X_{\gamma}$ at $\gamma$. On the other hand, one can also see that if $\gamma'$ is invariant under mutation at $\gamma$ as shown in Figure \ref{fig:mutation2}, we also have
        $$\hol_{\cu{L}^{\tw'}}(\til{\gamma}')=\hol_{\cu{L}^{\tw}}(\til{\gamma}')(1+\hol_{\cu{L}^{\tw}}(\til{\gamma})).$$
        \begin{figure}[H]
	       \centering
	       \includegraphics[width=120mm]{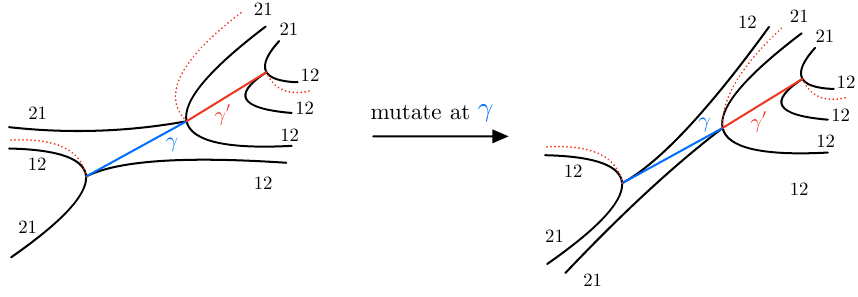}
	       \caption{$\gamma'$ is invariant under mutation at $\gamma$.}
	       \label{fig:mutation2}
        \end{figure}
        These calculations prove that $\{X_i\}_{i=1}^{N_{\bb{L}^{\trop}}-3}$ and $\{X_i'\}_{i=1}^{N_{\bb{L}^{\trop}}-3}$ are related by the $\cu{X}$-cluster mutation.
    \end{proof}

    \begin{remark}
        As suggested by \cite{Spectral_networks}, non-degenerated spectral networks should provide cluster transformations when they are connected through degenerated spectral networks. We, therefore, believe cluster structure continues to exist on $\cu{M}(X_{\Sigma},\bb{L}^{\trop})$ when $r>2$. But the combinatorics and a detailed proof seem to be more involved and we should leave it for later investigation. 
    \end{remark}

	\bibliographystyle{amsalpha}
	\bibliography{geometry-oh}

 \end{document}